\theoremstyle{theorem}
\newtheorem{theorem}{Theorem}
\newtheorem{lemma}[theorem]{Lemma}
\newtheorem{proposition}[theorem]{Proposition}
\newtheorem*{theorem*}{Theorem~2}
\newtheorem{conjecture}[theorem]{Conjecture}
\theoremstyle{definition}
\newtheorem{defi}[theorem]{Definition}
\newtheorem{rem}[theorem]{Remark}
\newtheorem{problem}{Problem}
\newtheorem{question}{Question}
\newcommand{\FvF}{{\rm FvF}}
\newcommand{\FvA}{{\rm FvA}}
\newcommand{\AvF}{{\rm AvF}}
\newcommand{\AvA}{{\rm AvA}}
\newcommand{\ava}{{\scriptscriptstyle {\rm AvA}}}
\newcommand{\fvf}{{\scriptscriptstyle {\rm FvF}}}
\newcommand{\fva}{{\scriptscriptstyle {\rm FvA}}}
\newcommand{\avf}{{\scriptscriptstyle {\rm AvF}}}
\newcommand{\X}{\mathsf{d}(X)}
\newcommand{\dual}[1]{\mathsf{d}#1}
\author[1]{Anjali Bhagat}
\author[2]{Tanmay Kulkarni}
\author[1]{Urban Larsson\thanks{ORCID: \href{https://orcid.org/0000-0003-3663-1720}{0000-0003-3663-1720}}}
\author[1]{Divya Murali\thanks{\textit{E-mail addresses:} anjali.bhagat@iitb.ac.in, tanmayhk@uw.edu, larsson@iitb.ac.in, divyamurali963@gmail.com }}
\affil[1]{Department of IEOR, Indian Institute of Technology Bombay}
\affil[2]{University of Washington}
\title{Tie-breaking in self interest cumulative subtraction games}
\begin{document}

\maketitle

\begin{abstract}
Subtraction games have a rich literature as normal-play combinatorial games (e.g., Berlekamp, Conway, and Guy, 1982). Recently, the theory has been extended to zero-sum scoring play (Cohensius et al.\ 2019). Here, we take the approach of cumulative self-interest games, as introduced in a recent framework preprint by Larsson, Meir, and Zick. By adapting standard Pure Subgame Perfect Equilibria (PSPE) from classical game theory, players must declare and commit to acting either ``friendly'' or ``antagonistic'' in case of indifference. 
Whenever the subtraction set has size two, we establish a tie-breaking rule monotonicity: a friendly player can never benefit by a {\em deterministic \em deviation} to antagonistic play. This type of terminology is new to both ``economic'' and ``combinatorial'' games, but it becomes essential in the self-interest cumulative setting. The main result is an immediate consequence of the tie-breaking rule's monotonicity; in the case of two-action subtraction sets, two antagonistic players are never better off than two friendly players, i.e., their PSPE utilities are never greater. For larger subtraction sets, we conjecture that the main result continues to hold, while tie-breaking monotonicity may fail, and we provide empirical evidence in support of both statements.
\end{abstract}

\section{Introduction}
% Introduction
We study two-player alternating play perfect information games and we adapt the following definition of a self-interest symmetric cumulative subtraction game $S\subset \mathbb N = \{1,2,\ldots \}$. There is a single heap of pebbles, of size $h\in \mathbb N_0$, and the players alternately remove $s\in S$ pebbles from the heap, until the current player is not able to move from some position $h$ because, for all $s\in S, h-s<0$. The players are called Alice and Bob, and whatever they remove from the common heap, they put in their own individual pockets. Each player aims to maximize the size of their pocket (i.e., their cumulative utility) when the game ends. 

We adapt standard tools from game theory, particularly the classical backward induction procedure, to player profiles in Pure Subgame Perfect Equilibrium (PSPE). To avoid ambiguity in the recursive evaluation and to ensure the game tree is well-defined, we assume {\em deterministic tie-breaking} in cases of indifference. We refer to a pair of tie-breaking conventions, one for each player, as deterministic if they satisfy six properties. They cannot be random or hidden in any way, and they must be \emph{universal}, meaning that they are applied to every position whenever there is indifference between options, with respect to PSPE-computation. Moreover, they must be \emph{uniform}, meaning that the same conventions are used across all positions. They must be \emph{Markovian} (or history-independent), that is, they cannot depend on previous actions in any way. Finally, they are  \emph{unilateral}, meaning that each player's convention is independent of the other player's choice. One can prove by induction that PSPE play is indeed well defined by using these tie-breaking axioms.

Using a pair of deterministic tie-breaking conventions, players still maximize their own utility. However, when multiple options yield the same personal outcome, the choice between them may influence the opponent’s utility. We classify this influence as \emph{friendly} when it benefits the opponent and \emph{antagonistic} when it harms them. This effect is understood to be \emph{local}, meaning that it depends only on outcome discrepancies within any given pair of deterministic tie-breaking conventions. As shown in the Appendix, page~\pageref{sec:unidev}, such local choices can have globally counterintuitive consequences when viewed across ranges of possible conventions. 

Deterministic tie-breaking conventions are commitments that are applied consistently throughout the game tree, rather than as decisions made at individual nodes. For instance, given a pair of tie-breaking conventions, for every heap size, even one as large as $10^{10}$, a player is required to know whether they are indifferent. If so, for example, a player who adopts a friendly convention must then select a move that maximizes the opponent’s result, based on the already computed PSPE utilities of the resulting subpositions.

Following standard assumptions in combinatorial game theory, we assume the existence of such perfectly rational and computationally capable players. When both players adopt the friendly convention, we refer to the resulting setting as FvF or the {\em friendly} convention. If both adopt antagonistic conventions, we denote it AvA or the {\em antagonistic} convention. These will serve as our primary self-interest frameworks throughout the paper.\footnote{In our model, the tie-breaking rule is fixed in advance and is not selected as part of the PSPE; analyzing strategic choice over tie-breaking behavior would require a different model, such as a simultaneous-play normal form with mixed strategies. Here, we focus on the recursive structure and properties of combinatorial games instead.}

Subtraction games have been studied as win/loss games, mostly in the last-move convention ``a player who cannot move loses'' \cite{BCG1982, LS2025}, but also more recently as zero-sum scoring-play games \cite{CLMW2019}. The current self-interest setting was introduced in the framework preprint \cite{LMZ}. Let us first recall that setting, and after that (in Definition~\ref{def:outcomes}) we will introduce a slight generalization that will be convenient for the purpose of this paper and will also raise new interesting research problems.

\subsection{The symmetric outcome function}
Consider an antagonistic self-interest cumulative subtraction game. Following \cite{LMZ}, let us define the self-interest {\em outcome function}, under symmetric tie-breaking, of a game $S\subset \mathbb N$. The outcome for the two players is the ordered pair $o_\ava(h) = (o^1_\ava(h), o^2_\ava(h))$, where $h\in \mathbb N_0=\mathbb N\setminus \{0\}$ represents the current heap size. 
The cumulative utilities (a.k.a individual outcomes) can be recursively calculated using the outcome of heaps of smaller size. Let the first player choose $s \in S$ as their first move in PSPE play. Then $o^1(h) = s + o^2(h-s)$ and $o^2(h) = o^1(h-s)$. Thus, after the first move, the heap size is reduced to $h-s$, and the order of the players is switched (since the next move is made by the originally second player). Here, whenever there is {\em indifference} (i.e., more than one action leading to the same PSPE cumulation for the current player), the action chosen minimizes the PSPE cumulation for the other player, representing antagonistic behavior. In the case of {\em friendly} cumulative play, the individual outcomes $o^1_\fvf$ and $o^2_\fvf$ are analogous, but the action chosen maximizes the other player's cumulation in case of indifference, representing friendly behavior. 

Note that these two settings remain {\em symmetric}, in the sense that the outcome pairs do not depend on ``who starts'', Alice or Bob, and therefore, unless otherwise stated, we use the convention that Alice starts.  Thus, the setting represents a distant relative of the classical `impartial' combinatorial games. The outcome definition gives us an algorithm to compute perfect player results/utilities.

Let the discrepancy of player $i$ be $\delta^i(h) = o^i_\ava(h)-o^i_\fvf(h)$, for $i\in \{1,2\}$. A valid question is whether this discrepancy will be non-zero.
%both players adapting friendly tie-breaking rules versus both players adapting antagonistic tie-breaking rules. 
In Table~\ref{tab:3_5} we list some initial outcomes and discrepancies for the two-action ruleset $S=\{3,5\}$. %for some initial heap sizes. 
\begin{table}[h!]\caption{The subtraction set is $S=\{3,5\}$. The first non-zero discrepancy appears at $x=14$, by the second player losing one point if both players choose antagonistic tie-breaking rules.}
  \label{tab:3_5}
  \centering
  \resizebox{\textwidth}{!}{  
\begin{tabular}{|c| c c c c c c c c c c c c c c c c|}
\hline
$x$		&0 &1 &2 &3 &4 &5 &6 &7  & 8&9 &10&11&12&13&14&15\\ 
\hline
$o_\fvf(x)$	&$(0,0)$ &$(0,0)$ &$(0,0)$ &$(3,0)$ &$(3,0)$ &$(5,0)$  &$(5,0)$ &$(5,0)$ & $(5,3)$& $(5,3)$&$(5,5)$&$(6,5)$&$(6,5)$ &$(8,5)$&$(8,6)$&$(10,5)$\\ 
\hline
$o_\ava(x)$	&$(0,0)$ &$(0,0)$ &$(0,0)$ &$(3,0)$ &$(3,0)$ &$(5,0)$  &$(5,0)$ &$(5,0)$ & $(5,3)$& $(5,3)$&$(5,5)$&$(6,5)$&$(6,5)$ &$(8,5)$&$(8,5)$&$(10,5)$\\ 
\hline
$\delta^1(x)$	& 0&0 &0 &0 &0 &0 &0 &0 &0 &0 &0&0&0 &0&0&0\\
\hline
$\delta^2(x)$ & 0&0 &0 &0 &0 &0 &0 &0 &0 &0 &0&0&0 &0&$\boldsymbol{-1}$&0\\
\hline
\end{tabular}}
\end{table}
At heap size $14$, Bob loses one unit in the antagonistic setting. The PSPE move sequence in the friendly setting is 3-3-5-3, where Alice `sacrifices' in her first removal. In the antagonistic setting, she instead plays `greedily', and the PSPE sequence becomes 5-5-3, where Alice gets both the first and the last move. We experimentally discovered a number of such examples of discrepancies in the two and three-action settings, summarized in Section~\ref{sec:FvFvAvA}, with related conjectures. 

The main result of this paper, Theorem~\ref{thm:main}, is that, if the subtraction set has size 2,  then, for any heap size, each player's utility is no worse in the FvF setting than in the AvA setting. We conjecture that this result continues to hold if $|S|>2$; see Section~\ref{sec:2}.

\begin{rem}
Readers familiar with zero-sum games may note that the move sequence discussed in the context of Table~\ref{tab:3_5}  cannot appear in an optimal-play zero-sum setting, where player removals add and subtract to a common score, respectively. Alice would instead start by removing 5, and hence Bob must take 5, and Alice gets the last move by taking 3. This, of course, leads to a loss in ``social welfare'' as the whole heap would not be consumed. 
 A related topic concerns similarities between the $\AvA$ antagonistic and the zero-sum settings. We explore discrepancies between these games in a follow-up paper \cite{Tanmay}.% where we prove that if $|S|=2$ (stated here as a conjecture in Section~\ref{sec:AvAvZs}), PSPE play coincides for zero-sum games and the antagonistic setting. However, experimental studies reveal that discrepancies appear already for $|S|=3$ (related conjectures appear in Section~\ref{sec:AvAvZs}).
\end{rem}
%Let us return to the main topic for this paper, concerning discrepancies between friendly and antagonistic tie-breaking rules. 

If the players play several rounds of the same game, and both have so far both been committed to the friendly setting, could either player benefit by instead deviating to deterministic antagonistic?

%This question induces a refined setting; .
\begingroup
\sffamily
%\texttt{
\begin{quote}
On a remote island, two clans of Penguins thrived. Over generations they had developed an unusual gift: a talent for recursive computation in games played with heaps of fish. In the picture, Alice appears as the friendly, fish-pile-wise Penguin, while Bob, equally skilled, deviates and commits to antagonistic play in cases of indifference. By natural selection, only the fittest fish-counters survive.\footnote{Images thanks to https://github.com/EagleoutIce/tikzpingus.}
\end{quote}
\par
\endgroup
\vspace{1 mm}
\begin{center}
\begin{tikzpicture}[scale=.9]

% Penguins
\pingudefaults{wings grab, eyes shiny} 
\pingu[left wing shock, eyes angry, xshift=2.1cm, yshift=1.3cm, eye patch left, glow solid=purple] 
\pingu[glow solid=yellow, wings wave, xshift=-2.8cm, yshift=1.3cm]

% Fish (Bottom row)
\foreach \x/\angle in {0/10, 0.25/-5, 0.5/15, 0.75/-10, 1/5} {
    \begin{scope}[shift={(\x*1.2, 0.1)}, rotate=\angle]
        % Body
        \filldraw[fill=white!60!blue, draw=black, line width=0.3pt] (0,0) ellipse (0.28 and 0.15);
        % Tail closer
        \filldraw[fill=white!60!blue, draw=black] (0.27,0) -- (0.34,0.07) -- (0.34,-0.07) -- cycle;
        % Eye
        \fill[black] (-0.22, 0.05) circle (0.02);
    \end{scope}
}

% Middle row
\foreach \x/\angle in {0.125/5, 0.375/-8, 0.625/12, 0.875/-6} {
    \begin{scope}[shift={(\x*1.2, 0.4)}, rotate=\angle]
        \filldraw[fill=white!60!blue, draw=black, line width=0.3pt] (0,0) ellipse (0.28 and 0.15);
        \filldraw[fill=white!60!blue, draw=black] (0.27,0) -- (0.34,0.07) -- (0.34,-0.07) -- cycle;
        \fill[black] (-0.22, 0.05) circle (0.02);
    \end{scope}
}

% Top row
\foreach \x/\angle in {0.25/7, 0.75/-5} {
    \begin{scope}[shift={(\x*.9, 0.7)}, rotate=\angle]
        \filldraw[fill=white!60!blue, draw=black, line width=0.3pt] (0,0) ellipse (0.28 and 0.15);
        \filldraw[fill=white!60!blue, draw=black] (0.27,0) -- (0.34,0.07) -- (0.34,-0.07) -- cycle;
        \fill[black] (-0.22, 0.05) circle (0.02);
    \end{scope}
}

% Peak
\begin{scope}[shift={(.5, 0.9)}]
    \filldraw[fill=white!60!blue, draw=black, line width=0.3pt] (0,0) ellipse (0.28 and 0.15);
    \filldraw[fill=white!60!blue, draw=black] (0.27,0) -- (0.34,0.07) -- (0.34,-0.07) -- cycle;
    \fill[black] (-0.22, 0.05) circle (0.02);
\end{scope}

%Bob's cumulation
\foreach \x/\angle in {2/10, 2.25/-5} {
    \begin{scope}[shift={(\x*2.2, 0.1)}, rotate=\angle]
        % Body
        \filldraw[fill=white!60!blue, draw=black, line width=0.3pt] (0,0) ellipse (0.28 and 0.15);
        % Tail closer
        \filldraw[fill=white!60!blue, draw=black] (0.27,0) -- (0.34,0.07) -- (0.34,-0.07) -- cycle;
        % Eye
        \fill[black] (-0.22, 0.05) circle (0.02);
    \end{scope}
}
%Alice's cumulation
\foreach \x/\angle in {-1.55/-15} {
    \begin{scope}[shift={(\x*2.2, 0.1)}, rotate=\angle]
        % Body
        \filldraw[fill=white!60!blue, draw=black, line width=0.3pt] (0,0) ellipse (0.28 and 0.15);
        % Tail closer
        \filldraw[fill=white!60!blue, draw=black] (0.27,0) -- (0.34,0.07) -- (0.34,-0.07) -- cycle;
        % Eye
        \fill[black] (-0.22, 0.05) circle (0.02);
    \end{scope}
}

\end{tikzpicture}
\end{center}
\vspace{3mm}
The Penguin narrative will be revisited in the Appendix. 
\begin{question}\label{ques:1}
Could any personalized player benefit from a unilateral deviation from a deterministic friendly tie-breaking?
\end{question}
This question requires a generalized outcome function, and we devote Section~\ref{sec:2} to this.

\subsection{Outline}
Let us outline the content of this paper. In Section~\ref{sec:2} we generalize the outcome function to four deterministic tie-breaking pairs and explain the road map to the main theorem for two-action rulesets. In Section~\ref{sec:main}, we prove the main theorem, Theorem~\ref{thm:main}, via a few lemmas, highlighting first player advantage, greedy play vs. sacrifice, tie-breaking monotonicity and more. In Section~\ref{sec:FvFvAvA}, we outline a follow-up project with several structurally detailed conjectures concerning FvF vs AvA (those are the conjectures that motivated this study), while in Section~\ref{sec:AvAvZs}, we outline some conjectures by instead comparing AvA with zero-sum play. In Section~\ref{sec:future}, we discuss further open problems, and in Section~\ref{sec:related}, we mention some related work. Finally, in the Appendix (page \pageref{sec:unidev}), we illustrate how cases where $|S|>2$ can differ from two action rulesets, in globally counter intuitive ways. 

\section{A generalized outcome and the main result}\label{sec:2}
In combinatorial games, we define perfect play outcome functions as backward induction algorithms to compute PSPE; in asymmetric cases, such as ``partizan play'', perfect play result can be sensitive to who starts. Observe that in our setting, Question~\ref{ques:1} induces two more settings, and a more general outcome function. Let us list all settings.  
\begin{itemize}
\item[\bf{FvF:}] both players act friendly, in case of indifference;
\item[\bf{FvA:}] Alice, as a starting player, acts friendly, and Bob is antagonistic, in case of indifference;
\item[\bf{AvF:}] Bob, as the second player, acts friendly, whereas Alice is antagonistic, in case of indifference;
\item[\bf{AvA:}] both players are antagonistic, in case of indifference.
\end{itemize}
This includes assymetric settings, by their tie-breaking commitments, although the ruleset itself is symmetric. However, by defining the notion of a dual setting, we can still use a relatively compact definition of ``outcome''. 
%Observe that this requires a generalization of the outcome function. 

Let \( \tau = \{\FvF, \AvF, \FvA, \AvA\} \), and let $X\in \tau$. The \emph{dual} of \( X \), denoted \( \X \), is defined by:
\[
\X =
\begin{cases}
\FvF & \text{if } X = \FvF, \\
\FvA & \text{if } X = \AvF, \\
\AvF & \text{if } X = \FvA, \\
\AvA & \text{if } X = \AvA.
\end{cases}
\]
Thus, for all $X$, $\dual(\X)=X$ and $\X=X$ if and only if $X\in \{\FvF, \AvA\}$. 
\begin{defi}[The Outcome Function]\label{def:outcomes}
Consider a heap size $h\ge 0$ and $S(h):=S\cap \{1,\ldots ,h\}$. Let, for all \( X \in \tau \),  $o^1_X(h)=o^2_X(h)= 0$, if $h<\min S$. Otherwise, let
\begin{align*}
o^1_X(h) &= \max\{o^2_{\X}(h - s) + s \ | \ s \in S(h)\},\\
o^2_X(h) &= o^1_{\X}(h - s^*),
\end{align*}
where $$s^* = \textrm{argmin}_{s \in S^*(h)} \ o^1_{\X}(h - s),$$ if $X\in \{\AvF, \AvA\}$, and otherwise 
$$s^* = \textrm{argmax}_{s \in S^*(h)} \ o^1_{\X}(h - s).$$ 
Here $S^*(h) \subseteq S$ denotes the set of {\em indifferent actions}; this set contains the set of actions the current player can take from $h$ that give them the same cumulation.\footnote{There is some abuse of notation, since this action may not be unique; but if it is not unique, then there is no meaningful difference between them, as they will lead to the same PSPE cumulations for both players, when the game ends.}
For all $X\in \tau$, for all $h\ge 0$, let $o_X(h)=(o^1_X(h),o^2_X(h))$ be the $X$-outcome of $h$.
\end{defi}

Given a tiebreaking convention, we usually refer to a pair of (EGT) utilities in PSPE simply as a (CGT) {\em outcome}. As mentioned, even if the ruleset is symmetric, since the players may have different tie-breaking rules, a game played is not necessarily symmetric in the strict sense. If one player is antagonistic at some stage of play, then they are committed to being antagonistic at all stages of play, and similarly for ``friendly''. This is a situation that cannot appear in standard CGT impartial games. In this sense, the move options depend on their committed tie-breaking rule. To the best of our knowledge, such situations have not yet been studied in the literature.

Thus, there are four deviation cases to consider, listed as  FvF $\rightarrow$ AvF, FvF $\rightarrow$ FvA, AvA $\rightarrow$ FvA and AvA $\rightarrow$ AvF. For example, in FvF $\rightarrow$ AvF, in the FvF setting, both players resolve ties in a friendly manner, while in AvF, Alice (playing first) deviates with an antagonistic tie-breaking rule. 

 In the case \( |S| = 2 \), we here prove that a unilateral deviation from friendly tie-breaking to antagonistic ditto never benefits either player; as we will see, whenever starting from FvF, the deviating player will be unaffected, while the other player may suffer in some cases (Lemma~\ref{lem:fvfdeviate}). As a consequence, we will get:

\begin{theorem}[Main Theorem]\label{thm:main}
Consider any subtraction set $S$, with $|S|=2$, and suppose that both players act friendly in case of indifference. Then each player's PSPE utility is never worse than if both players have antagonistic tie-breaking rules. 
\end{theorem}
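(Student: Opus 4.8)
The plan is to establish a stronger ``tie-breaking monotonicity'' package and deduce the theorem from it. Write $S=\{a,b\}$ with $a<b$, so $S(h)=\{a\}$ for $a\le h<b$ and $S(h)=\{a,b\}$ for $h\ge b$. I will prove, by strong induction on $h$, all eight of the following statements simultaneously, for every $h\ge 0$:
\[
\begin{array}{llll}
o^1_\avf(h)=o^1_\fvf(h), & o^2_\avf(h)\le o^2_\fvf(h), & o^2_\fva(h)=o^2_\fvf(h), & o^1_\fva(h)\le o^1_\fvf(h),\\
o^1_\ava(h)=o^1_\fva(h), & o^2_\ava(h)\le o^2_\fva(h), & o^2_\ava(h)=o^2_\avf(h), & o^1_\ava(h)\le o^1_\avf(h).
\end{array}
\]
In words: switching the first (respectively second) player from the friendly to the antagonistic convention leaves that player's PSPE utility unchanged and can only weakly decrease the opponent's. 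Granting this, the theorem is immediate, since $o^1_\ava(h)\le o^1_\avf(h)=o^1_\fvf(h)$ and $o^2_\ava(h)=o^2_\avf(h)\le o^2_\fvf(h)$.

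The base case $h<a$ is trivial ($o_X(h)=(0,0)$ for all $X\in\tau$), and the ``forced'' case $a\le h<b$ has a unique legal move, so each of the eight statements at $h$ reduces to its dual statement at $h-a$, covered by the inductive hypothesis. So assume $h\ge b$. The \emph{first-coordinate} claims are easy: since $o^1_X(h)=\max\{a+o^2_{\mathsf dX}(h-a),\,b+o^2_{\mathsf dX}(h-b)\}$, the inductive hypothesis supplies exactly the coordinatewise comparison of the two $o^2$-values at $h-a$ and $h-b$ needed to compare the maxima termwise, giving $o^1_\avf(h)=o^1_\fvf(h)$, $o^1_\ava(h)=o^1_\fva(h)$, $o^1_\fva(h)\le o^1_\fvf(h)$, and $o^1_\ava(h)\le o^1_\avf(h)$. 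The two \emph{second-coordinate inequalities} then reduce to the second-coordinate equalities at smaller heaps: those equalities force the indifferent-action set $S^*(h)$ to coincide in the two settings being compared, and $o^2_X(h)$ is the value $o^1_{\mathsf dX}(\text{selected subposition})$, which is a $\min$ over that common set in the antagonistic setting and a $\max$ in the friendly one, while $o^1_\fva\le o^1_\fvf$ (respectively $o^1_\ava\le o^1_\avf$) pointwise by the first-coordinate inequalities just obtained; hence $\min\le\max$ closes it.

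The substance is the two \emph{second-coordinate equalities} $o^2_\fva(h)=o^2_\fvf(h)$ and $o^2_\ava(h)=o^2_\avf(h)$, i.e.\ the assertion that the deviating second player really is unaffected. Here $o^2_X(h)=o^1_{\mathsf dX}(h-s^*)$, where $s^*$ is the tie-broken element of $S^*(h)$. The inductive hypothesis equates the relevant $o^1$-values at $h-a$ and $h-b$ across the two settings, but between the $o^2$-values that determine $S^*(h)$ it gives only inequalities, so $S^*(h)$ can genuinely differ between the two settings: a tie in $\FvF$ may become a strict preference in $\FvA$, or conversely. One must show that after the first player applies the relevant $\arg\max$ (or $\arg\min$) over the possibly different indifferent sets, the value of the selected subposition is the same. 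When both settings are ties, or one has a strict preference that the other shares, this is routine; the delicate configuration is when one setting is indifferent between $a$ and $b$ while the other strictly prefers the very move that the first setting's tie-breaking rule would have rejected, so that termwise monotonicity of $o^1_X$ alone is not enough.

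Resolving the delicate configuration is, I expect, the main obstacle. It requires the two auxiliary lemmas the paper advertises: a first-player-advantage lemma ($o^1_X(h)\ge o^2_X(h)$) and a greedy-versus-sacrifice lemma controlling how $o^1_X$ changes under a heap shift by $b-a$ (in particular monotonicity of $o^1_X$ over the relevant range), plus a short argument showing that, in precisely this configuration, the mismatch between the settings' $o^2$-values at $h-a$ and at $h-b$ forces a \emph{strict} discrepancy already at the larger of $h-a,h-b$; this pins $o^1_X(h-a)$ and $o^1_X(h-b)$ together and makes the two tie-broken selections agree. The auxiliary lemmas themselves need induction and are best interleaved into the same simultaneous induction on $h$; everything else is bookkeeping with the recursion of Definition~\ref{def:outcomes}.
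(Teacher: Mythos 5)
Your overall architecture coincides with the paper's: the eight statements you propose to prove by simultaneous induction are exactly the union of the paper's two tie-breaking monotonicity lemmas (Lemma~\ref{lem:fvfdeviate} and Lemma~\ref{lem:avadeviate}), and your deduction of the theorem from them is the paper's one-line proof. Your handling of the base and forced cases, of the four first-coordinate claims (termwise comparison of the two maxima via the inductive hypothesis on $o^2$ at $h-a$ and $h-b$), and of the two second-coordinate \emph{inequalities} (common indifference set, then $\min\le\max$ over it) is correct and matches the paper's items (i), (iii), (iv).

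There is, however, a genuine gap at precisely the point you flag and then defer: the second-coordinate \emph{equalities} $o^2_{\fva}(h)=o^2_{\fvf}(h)$ and $o^2_{\ava}(h)=o^2_{\avf}(h)$. When the first player's PSPE move differs between the two settings being compared --- say she plays $s_1$ in FvF but sacrifices to $s_2$ in FvA, which can happen because induction gives only $o^2_{\avf}(h-s)\le o^2_{\fvf}(h-s)$ --- one must prove $o^1_{\avf}(h-s_1)=o^1_{\avf}(h-s_2)$, i.e., that the second player's value is identical on the greedy branch $H_1=h-s_1$ and the sacrifice branch $H_2=h-s_2$. You write that this ``requires the two auxiliary lemmas the paper advertises plus a short argument'' and stop there; but this step is not bookkeeping, it is the entire mathematical content of the theorem. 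It is where the paper invests Lemmas~\ref{lem:start}, \ref{lem:sacr} and \ref{lem:lastmove} together with a two-sided argument: a coupling strategy by which Bob transfers any line of play from $H_1$ to $H_2$ (so he cannot do strictly better on the smaller branch), and an endgame/parity analysis via the sacrifice lemmas (so he cannot do strictly better on the larger branch), split according to the dominant and non-dominant regimes. Your proposed substitute --- that the mismatch ``forces a strict discrepancy already at the larger of $h-a,h-b$'' which then ``pins $o^1_X(h-a)$ and $o^1_X(h-b)$ together'' --- is not the paper's argument and is too vague to assess; it is not clear why a strict discrepancy at a subheap would yield the required equality of second-player values across the two branches. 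Until that step is supplied, the proof is incomplete.
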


 Our methods to prove this result do not apply in general if \( |S| > 2 \),  and we provide a counter example, when unilateral deviation is non-monotone, namely if $S=\{4,5,9\}$, in Table~\ref{tab:deviation459} in the Appendix. 
We will study the two unilateral deviations from FvF, to FvA and AvF, while the converse/symmetric deviations from AvA, to AvF and FvA are analogous. A unilateral deviation from friendly to antagonistic may, in rare cases, be beneficial to the deviating player; still, we have numerical evidence that Theorem~\ref{thm:main} continues to hold for any subtraction set of size greater than 2 (see Section~\ref{sec:future}). However, we currently have no formal method to jump directly from both friendly to both antagonistic.

 \begin{conjecture}\label{con:main}
Consider any subtraction set $S$, and suppose both players act friendly in case of indifference. Then each player's PSPE utility is never worse than if both players have antagonistic tie-breaking rules. 
\end{conjecture}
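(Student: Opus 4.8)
The plan is to attack Conjecture~\ref{con:main} by comparing the two self-dual conventions $\FvF$ and $\AvA$ \emph{directly}, instead of routing through the mixed settings $\FvA$ and $\AvF$ as in the proof of Theorem~\ref{thm:main}. The deviation-chain used there is unavailable for $|S|>2$ precisely because tie-breaking monotonicity can fail (the $S=\{4,5,9\}$ example in Table~\ref{tab:deviation459}), so FvF $\ge$ AvA cannot be obtained as FvF $\ge$ FvA $\ge$ AvA. A convenient structural feature makes the direct route natural: since $\FvF$ and $\AvA$ are exactly the self-dual conventions, the recursions for $o_{\fvf}$ and $o_{\ava}$ never leave their own setting, so the entire argument stays inside these two diagonal conventions. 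I would run a single strong induction on the heap size $h$ with the joint hypothesis $P(h)$: for every $h'<h$ and every $i\in\{1,2\}$, $o^i_{\fvf}(h')\ge o^i_{\ava}(h')$.

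The first-player inequality propagates cleanly. Because $o^1_{\fvf}(h)=\max_{s\in S(h)}\{s+o^2_{\fvf}(h-s)\}$ and $o^1_{\ava}(h)=\max_{s\in S(h)}\{s+o^2_{\ava}(h-s)\}$, the hypothesis $o^2_{\fvf}(h-s)\ge o^2_{\ava}(h-s)$ from $P(h)$ dominates the two maximands term by term, giving $o^1_{\fvf}(h)\ge o^1_{\ava}(h)$ with no further input. Thus the whole difficulty concentrates in the second-player inequality.

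For the second player, write $a$ for the move selected by the friendly tie-break in $\FvF$ and $b$ for the move selected by the antagonistic tie-break in $\AvA$, so that $o^2_{\fvf}(h)=o^1_{\fvf}(h-a)$ and $o^2_{\ava}(h)=o^1_{\ava}(h-b)$. Applying $P(h)$ at $h-a$ gives $o^1_{\fvf}(h-a)\ge o^1_{\ava}(h-a)$, so it suffices to prove the \emph{move-comparison inequality} $o^1_{\ava}(h-a)\ge o^1_{\ava}(h-b)$. Since $b$ minimizes $o^1_{\ava}(h-s)$ over the $\AvA$-indifferent set $S^*_{\ava}(h)=\{s\in S(h): s+o^2_{\ava}(h-s)=o^1_{\ava}(h)\}$, this inequality holds automatically as soon as $a\in S^*_{\ava}(h)$, and more generally it would follow from a structural comparison of $S^*_{\fvf}(h)$ and $S^*_{\ava}(h)$ together with their extremal (friendly-maximal versus antagonistic-minimal) selections. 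I would therefore try to isolate a single \textbf{compatibility lemma}: the friendly selection $a$ always lands on a position no worse for the first mover under $\AvA$ than the antagonistic selection $b$.

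The hard part will be exactly this compatibility lemma, and it is the reason the statement is still conjectural. The obstruction is that the gap $g(t):=o^2_{\fvf}(h-t)-o^2_{\ava}(h-t)\ge 0$ is generally nonconstant in $t$, and since the two maximands differ only by $g$ (the $\FvF$ bracket equals the $\AvA$ bracket plus $g(t)$), adding a nonconstant $g$ can move the argmax; this makes $S^*_{\fvf}(h)$ and $S^*_{\ava}(h)$ genuinely distinct and decouples the friendly maximizer $a$ from the antagonistic minimizer $b$, which is the same decoupling that produces the non-monotone unilateral-deviation behaviour recorded for $S=\{4,5,9\}$. To control it I would strengthen $P(h)$ to carry information about the \emph{shape} of the maps $s\mapsto o^1_{\fvf}(h-s)$ and $s\mapsto o^1_{\ava}(h-s)$ on the indifferent sets --- for instance a single-crossing or alignment property forcing the friendly choice of $a$ to over-shoot rather than under-shoot the $\AvA$ first-mover value relative to $b$ --- and then verify that this stronger property is itself preserved by the recursion. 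When $|S|=2$ both indifferent sets have at most two elements, so this collapses to the finite case analysis already behind Theorem~\ref{thm:main}; the open content of the conjecture is precisely to find an alignment invariant that survives the induction for arbitrary $|S|$, and I would first corroborate any candidate invariant with the same computational searches used elsewhere in the paper before committing to a formal proof.
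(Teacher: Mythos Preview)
The statement is a \emph{conjecture} in the paper, not a theorem: the authors explicitly say ``we currently have no formal method to jump directly from both friendly to both antagonistic'' and offer only numerical evidence in support. So there is no proof in the paper to compare against.

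Your proposal is accordingly not a proof either, and you are candid about this. Your reduction is clean and correct as far as it goes: the first-player inequality $o^1_{\fvf}(h)\ge o^1_{\ava}(h)$ does follow termwise from the inductive hypothesis, and the second-player inequality does reduce to the move-comparison inequality $o^1_{\ava}(h-a)\ge o^1_{\ava}(h-b)$, where $a$ is the friendly selection in $\FvF$ and $b$ the antagonistic selection in $\AvA$. You also correctly diagnose why this is hard: the nonconstant gap $g(t)=o^2_{\fvf}(h-t)-o^2_{\ava}(h-t)$ can shift the argmax, so $S^*_{\fvf}(h)$ and $S^*_{\ava}(h)$ need not coincide and $a$ need not lie in $S^*_{\ava}(h)$.

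The genuine gap is precisely the one you name: your ``compatibility lemma'' is not proved, and your suggestion to carry a single-crossing or alignment invariant through the induction is purely programmatic---no candidate invariant is stated, let alone shown to be preserved. This is the entire open content of the conjecture, and your outline does not advance it beyond what the paper already observes. In short, your analysis correctly localizes the obstruction (and matches the paper's own discussion of why the two-action proof breaks), but the proposal remains a research plan rather than a proof.
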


%\ur{This seems misplaced.. it jumped from somewhere?} 

 %We call any move in PSPE by {\em optimal}, with respect to the current player. 
  %We begin by studying the cases where exactly one of the players deviate from both beeing friendly. Later, in Section~\ref{sec:converse}, we will study the converse situation, where exactly one of the players deviates to become friendly when both were antagonistic. 
 Consider ordered pairs of integers $(a,b)$ and $(c,d)$. Then $(a,b)\le (c,d)$ if $a\le c$ and $b\le d$, and similarly with ``$\ge$''. 

\section{Unilateral tie-breaking deviation and a proof of the main result}\label{sec:main} 

Suppose that both players follow deterministic tie-breaking conventions. We are interested in comparing the outcome under one such setting, where both players are universally friendly, with the setting in which exactly one of the players deviates, by committing to an antagonistic tie-breaking rule.

Whenever the subtraction set, $S=\{s_2,s_1\}$,  has exactly two actions, then we claim that the deviating player's PSPE-utilities will be exactly the same as if they had remained friendly, and the other player's PSPE-utilities cannot improve. This claim holds independently of who starts, and we prove it in Lemma~\ref{lem:fvfdeviate} in this section.  

In the following sections, we will fix the convention $s_2<s_1$. 

Through Lemma~\ref{lem:fvfdeviate}, we see how the claims in the first paragraph combine to form an induction proof.  For any two-action subtraction set $S$, let $\Delta=s_1-s_2>0$. A {\em greedy} move from $h\ge s_1$ is to play to $h-s_1$, and otherwise a player {\em sacrifices}. The {\em endgame} refers to the play to a heap size $h<s_1$. The {\em slack} is a leftover that cannot be attained by either player, and therefore refers to a heap of size $h<s_2$. 

There is a general proof of the next lemma in the zero-sum setting \cite{CLMW2019}. Here, it only holds in the two-action setting. See Table~\ref{tab:deviation459} in the Appendix on page~\pageref{sec:unidev}, where $S=\{4,5,9\}$ and where heaps of sizes $61,75,\ldots$ have strictly better PSPE utility for the second player. 

\begin{lemma}[First Player Advantage]\label{lem:start}
    If $|S|=2$, then the starting player's utility in PSPE is never worse than that of the second player.
\end{lemma}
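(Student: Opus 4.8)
The plan is to prove the statement by strong induction on the heap size $h$, simultaneously for all four tie-breaking conventions $X\in\tau$; that is, I would show that for every $h\ge 0$ and every $X\in\tau$, $o^1_X(h)\ge o^2_X(h)$. The base case $h<\min S=s_2$ is immediate since both outcomes are $0$. For the inductive step, fix $h\ge s_2$ and a convention $X$, and let $s^*\in S(h)$ be the PSPE first move under $X$, so that $o^1_X(h)=s^*+o^2_{\X}(h-s^*)$ and $o^2_X(h)=o^1_{\X}(h-s^*)$. Thus I must show
\[
s^*+o^2_{\X}(h-s^*)\ \ge\ o^1_{\X}(h-s^*).
\]
By the induction hypothesis applied to the smaller heap $h-s^*$ and the convention $\X$, we have $o^1_{\X}(h-s^*)\ge o^2_{\X}(h-s^*)$, so it would suffice to show $o^1_{\X}(h-s^*)-o^2_{\X}(h-s^*)\le s^*$. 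In other words, the ``first-player surplus'' $\phi_X(h):=o^1_X(h)-o^2_X(h)$ is always nonnegative, and the real content is a companion bound $\phi_X(h)\le$ (the move just made), which I would fold into the induction as a second invariant.

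Concretely, I would strengthen the induction hypothesis to the conjunction: for all $g<h$ and all $X\in\tau$, (i) $o^1_X(g)\ge o^2_X(g)$, and (ii) $o^1_X(g)-o^2_X(g)\le s_1$ (the largest subtraction value), with the sharper statement that if the PSPE move from $g$ removes $s$ then $o^1_X(g)-o^2_X(g)\le s$. Granting (ii), the displayed inequality follows: $o^1_{\X}(h-s^*)-o^2_{\X}(h-s^*)\le s'$ where $s'$ is the move taken from $h-s^*$, and $s'\le s_1$; but we need $\le s^*$, which is not automatic when $s^*=s_2<s_1$. This is where the two-action hypothesis $|S|=2$ must be used: from a heap $h-s^*$ one can remove at most $s_1$, and I would argue that the \emph{actual} amount removed in PSPE from $h-s^*$, combined with what is removed from $h$, lets us close the gap — essentially, a greedy/sacrifice case analysis (using the terminology $\Delta=s_1-s_2$, greedy vs.\ sacrifice, endgame, slack introduced just above the lemma). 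When $s^*=s_1$ (greedy) the bound $o^1_{\X}(h-s^*)-o^2_{\X}(h-s^*)\le s_1=s^*$ is exactly invariant (ii). When $s^*=s_2$ (sacrifice), I would show that sacrificing can only have been chosen because the resulting position $h-s_2$ is itself favourable enough to the current (about to be second) player, and then trace through the one remaining response option to bound the surplus by $s_2$.

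I expect the main obstacle to be precisely this sacrifice case: showing that when the PSPE prescribes the small move $s_2$ from $h$, the first-player surplus at $h-s_2$ does not exceed $s_2$. The delicate point is that after a sacrifice the opponent may themselves play greedily, building up a surplus close to $s_1>s_2$, so a naive bound fails; one has to exploit that a rational current player chose to sacrifice, which constrains $o^1_{\X}(h-s_2)$ relative to $o^1_{\X}(h-s_1)$, and that in a two-action game the ``two steps down'' positions $h-2s_2$, $h-s_1-s_2$, $h-2s_1$ are linked by $\Delta$. I would handle this by writing out $o^1_{\X}(h-s_2)$ and $o^1_{\X}(h-s_1)$ in terms of their own PSPE continuations and comparing, using invariant (i) to discard sign-wrong terms and invariant (ii) to bound the surviving ones, so that the optimality of the sacrifice move forces the needed inequality. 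The remaining cases — endgame heaps $s_2\le h<s_1$, where the only move is $s_2$ and the surplus is literally $s_2+0-0=s_2$, and the slack — are routine and serve mainly to anchor the induction.
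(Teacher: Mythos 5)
Your reduction is set up correctly, but the induction does not close at exactly the point you flag as the main obstacle, and the plan you sketch there does not supply the missing step. Writing $\phi_X(h)=o^1_X(h)-o^2_X(h)$, the recursion gives the identity $\phi_X(h)=s^*-\phi_{\X}(h-s^*)$ (where $\X$ is the dual convention), so your invariant (ii) is an automatic consequence of invariant (i) at the child node and adds no strength, while (i) at $h$ is precisely the statement $\phi_{\X}(h-s^*)\le s^*$. When $s^*=s_1$ this follows from (ii) at $h-s^*$; when $s^*=s_2$ you need $\phi_{\X}(h-s_2)\le s_2$, which (ii) delivers only if the PSPE move from $h-s_2$ is again $s_2$. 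In the remaining configuration (sacrifice at $h$, greedy at $h-s_2$) the required bound is equivalent to $\phi_X(h-s_1-s_2)\ge\Delta$, a strictly positive \emph{lower} bound on a surplus, and neither of your invariants ever produces a positive lower bound on $\phi$. The optimality of the sacrifice, $o^2_{\X}(h-s_2)\ge\Delta+o^2_{\X}(h-s_1)$, does dispose of the subcase where the opponent answers $s_2$ from $h-s_1$ (both branches then reach the same heap $h-s_1-s_2$, forcing $0\ge\Delta$), but when the opponent plays $s_1$ from both $h-s_2$ and $h-s_1$ you are left with $o^1_X(C)\ge\Delta+o^1_X(C-\Delta)$ together with the for-contradiction hypothesis $\phi_X(C)<\Delta$, for $C=h-s_1-s_2$; these are perfectly consistent with (i) and (ii), so ``discarding sign-wrong terms and bounding the survivors'' does not yield a contradiction. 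Closing this would need either a further descent with no evident termination or an additional structural lemma (e.g.\ monotonicity of $o^1$ in the heap size), neither of which appears in your proposal. That is a genuine gap.

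For what it is worth, the paper does not argue locally at all: it lower-bounds the first player's PSPE utility by the value of the all-greedy strategy, $ks_1+\ell s_2$, and then counts how many copies of $s_1$ and $s_2$ the second player can possibly collect from what remains, using $\Delta>0$ to absorb the opponent's possible extra $s_2$. If you want to keep your inductive route, you must add the missing positive lower bound as an explicit joint invariant (something of the form ``if sacrificing is weakly optimal at $h$, then $\phi(h-s_1-s_2)\ge\Delta$'') and prove it simultaneously, rather than hoping it falls out of (i) and (ii).
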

\begin{proof}
Recall $s_1>s_2$. Suppose that Alice plays greedily until she cannot play $s_1$. Then she gets utility $ks_1+ \ell s_2$, some $k>0$, some $\ell\ge 0$. If Bob gets to play $k$ copies of $s_1$, then Alice has the first player advantage at the endgame too, and hence Bob cannot get more than $ks_1+ \ell s_2$. If Bob can only collect $k-1$ copies of $s_1$, then he can at most get one more copy of $s_2$  than Alice, but their utility difference is at least $\Delta>0$ in Alice's favor. 
\end{proof}

We distinguish two parameter regimes for $S=\{s_2,s_1\}$:
\begin{itemize}
\item {\em dominance}: $2s_2 \le s_1$; 
\item {\em non-dominance}: $2s_2 > s_1$. 
\end{itemize}
In case of dominance, the boundary case $2s_2 = s_1$ is the {\em balanced regime}.

The usefulness of a sacrifice may appear limited while playing on a single heap of tokens. However, simple arithmetic shows that if $S=\{2,3\}$ then Alice benefits strictly when playing from a heap of size $7$, and if  $S=\{3,4\}$, then she can benefit strictly by sacrificing twice, playing from $h=17$. There are many more such examples, which are analogous to the zero-sum setting studied in \cite{CLMW2019}. In two-action zero-sum games, the usefulness of a sacrifice is limited to basic arithmetic considerations for small heap sizes. 
However, self-interest play appears to be more multifaceted. As shown in Table~\ref{tab:3_5}, there are situations where, under a deterministic tie-breaking rule, Alice sacrifices solely to benefit Bob, and Bob's immediate response is also a sacrifice. Note that the example occurs in the non-dominant regime. By contrast, the dominant regime generally exhibits less complex behaviour.

\begin{lemma}[Dominance and Greedy]\label{lem:sacr}
Consider PSPE play under any deterministic tie-breaking rule, and let $S=\{s_2,s_1\}$ with $s_2<s_1$. In the dominant regime, i.e. if 
\begin{equation}\label{eq:obs}
2s_2\le s_1,
\end{equation}
then a player cannot benefit strictly by sacrificing. 
\end{lemma}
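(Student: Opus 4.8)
The plan is to show, by strong induction on $h$, that whenever $h\ge s_1$ the greedy move $h\mapsto h-s_1$ is a best response for the mover under \emph{every} convention $X\in\tau$; since the mover's PSPE value is the maximum over the two actions, this is exactly the assertion that sacrificing cannot be strictly better. Writing out the two options and cancelling, the inequality to be proved is
\[
o^2_{\X}(h-s_2)-o^2_{\X}(h-s_1)\le \Delta,
\]
i.e. that passing from the heap $h-s_1$ to the heap $h-s_2$ — a gap of $\Delta=s_1-s_2$ with the same player to move — raises the opponent's PSPE utility by at most $\Delta$. So the real content is a Lipschitz-type bound on the opponent's outcome, which I would carry through the induction in tandem with the no-sacrifice claim, and state for all $X\in\tau$ since that is the generality the deviation results (Lemma~\ref{lem:fvfdeviate}) will need.

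To control $o^2$ I would strengthen the induction hypothesis with two further invariants. First, a \emph{slack identity}: in the dominant regime PSPE play always leaves a residual heap of size exactly $\sigma(h):=(h\bmod s_1)\bmod s_2$, independently of the convention. The idea is that Lemma~\ref{lem:start} rules out the wasteful sacrifices that would strand extra tokens, and dominance, $2s_2\le s_1$, means there is never an arithmetic incentive to deviate from taking $s_1$; this yields $o^1_X(h)+o^2_X(h)=h-\sigma(h)$, so $o_X(h)$ is pinned down by the single nonnegative quantity $o^1_X(h)-o^2_X(h)$ (nonnegativity being Lemma~\ref{lem:start}). Second, $\Delta$-gap control: $0\le o^i_X(h)-o^i_X(h-\Delta)\le\Delta$ for $i\in\{1,2\}$ and all $X$. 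The base case is the \emph{endgame} range $s_1\le h<s_1+s_2$: there greedy reaches a terminal heap and yields $s_1$, whereas a sacrifice yields $s_2+o^1(h-2s_2)\le s_2+(h-2s_2)=h-s_2<s_1$ (using $h\ge s_1\ge 2s_2$), so greedy is in fact \emph{strictly} better, and the auxiliary invariants follow by inspection. In the inductive step the two invariants feed each other and the no-sacrifice claim: the displayed inequality is precisely the $\Delta$-gap bound on $o^2$ applied at the smaller heap $h-s_2$, while the $\Delta$-gap bounds at $h$ unwind, via the recursion and the slack identity, into $\Delta$-gap bounds at heaps below $h$. A useful bookkeeping fact that drops out along the way: at a genuine tie the two tied actions produce opponent outcomes differing by exactly $\sigma(h-s_1)-\sigma(h-s_2)$, a purely arithmetic quantity of absolute value at most $s_2-1<\Delta$ (since $s_2\le\Delta$ in the dominant regime), so the convention's choice among indifferent actions can only perturb $o^2$ by a bounded amount.

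The step I expect to be the main obstacle is the one where exactly one of the two subpositions reached, $h-s_1$ and $h-s_2$, is indifferent while the other is not. If at $h-s_2$ the mover faces a real tie (so the convention may take the non-greedy action) but at $h-s_1$ it does not, then applying the $\Delta$-gap hypothesis naively to both subpositions only gives the bound $2\Delta$, not $\Delta$, and the induction does not close on its own; this is not a defect of the estimate but a genuinely tight situation, realized for instance by $S=\{2,5\}$ near $h=14$. Pushing through this case is where the slack identity and the tie-difference fact above must be combined with the tie equality in force (which rigidly forces $o^2_{\X}(h-s_2)-o^2_{\X}(h-s_1)=\Delta$) to recover the factor $\Delta$; organizing that combination cleanly — and verifying it survives all four choices of $X$ together with both boundary ranges $h<s_1$ and $s_1\le h<s_1+s_2$ — is the technical heart of the argument.
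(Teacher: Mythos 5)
Your route is entirely different from the paper's: the paper disposes of this lemma with a short strategy-stealing/counting argument (if Alice's sacrifice strictly profits, she needs at least $k\ge 2$ extra $s_2$-moves to recoup the loss $\Delta$, which by Lemma~\ref{lem:start} forces Bob to sacrifice somewhere too, and Bob could instead steal that segment by playing $s_1$ --- contradiction). You instead propose a simultaneous strong induction carrying three invariants (no-sacrifice, a slack identity $o^1_X(h)+o^2_X(h)=h-\sigma(h)$, and a $\Delta$-Lipschitz bound on both coordinates). The reduction of the no-sacrifice claim to the bound $o^2_{\dual{X}}(h-s_2)-o^2_{\dual{X}}(h-s_1)\le\Delta$ is correct, your base-case computation on $s_1\le h<s_1+s_2$ is fine, and your invariants do appear to be true in the dominant regime (they would in fact yield Proposition~\ref{prop:equality}-strength information as a byproduct). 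Proving them in tandem is a legitimate device and not circular in principle.

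The problem is that the proposal is a plan, not a proof: the step you yourself identify as "the technical heart" --- propagating the $\Delta$-gap bound on $o^2$ through a heap where exactly one of the two relevant subpositions is a genuine tie, so that the tie-broken moves $s^*$ differ and the naive estimate only gives $2\Delta$ --- is left unexecuted. You name the ingredients you expect to combine (the forced equality $o^2_{\dual{X}}(h-s_2)-o^2_{\dual{X}}(h-s_1)=\Delta$ at a tie, the slack identity, and the bound $|\sigma(h-s_1)-\sigma(h-s_2)|<\Delta$), but you do not carry out the combination, nor verify it across the four conventions and the boundary ranges where only one action is feasible. Until that case is closed, the induction does not go through, so as written the argument has a genuine gap precisely where the difficulty is concentrated. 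If you want to salvage the lemma cheaply, the paper's observation is the key shortcut: in the dominant regime a single sacrifice costs $\Delta\ge s_2$, so compensating requires at least two extra small moves, which cannot happen without the opponent also sacrificing --- and that is immediately self-defeating. Your heavier machinery is better saved for the stronger statement (no discrepancy at all in the dominant regime), where it would actually pay off.
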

\begin{proof}
The proof will use a strategy-stealing argument to establish a contradiction. Suppose that Alice sacrifices in her first move and profits strictly in PSPE play. This means that she must get at least $k$ more $s_2$ actions to compensate for her initial sacrifice, if $s_1 = ks_2 + r$, $0\le r< k$. However, since $k\ge 2$, this requires that Bob also plays a sacrifice at some point. Otherwise, Bob gets at least $3\Delta-s_1\ge \Delta>0$ larger utility than Alice, while Lemma~\ref{lem:start} assures that Alice, as a starting player, never gets worse than Bob in PSPE play. But Bob can replace this sacrifice with $s_1$, by stealing $ks_2 + r\le (k+1)s_2$ of Alice's required removal segment. Hence, Alice's first sacrifice cannot lead to a strict gain.
\end{proof}

A consequence of this result is that, in the case of dominance, $s_1$ is never a bad choice for the current player (if feasible); see Proposition~\ref{prop:equality}.  %Indeed, experimental results indicate consistently zero discrepancies in the dominant regime.  

In a zero-sum combinatorial game setting, parity consideration can be central, determining whether you or the opponent gets the last move. Here, it remains a decisive factor in the following sense. The concept of a `last move' is individualized; for all sufficiently large heaps (depending on $S$) both players have a last move. Sometimes you can sacrifice and secure some parity advantage; you win a last move that you otherwise would not have reached.

We are often interested in understanding the play from the options of a heap of size $h$. This motivates us to denote the branches $H_1=h-s_1$ and $H_2=h-s_2$, and think of the $H_i$s as games with full game trees, rather than `heap sizes'.

\begin{lemma}[Last Move and Sacrifice]\label{lem:lastmove}
If a player sacrifices playing in PSPE from a heap of size $h$, independently of tie-breaking rules, they know that one of the two scenarios applies:
\begin{itemize}
    \item[(i)] they will be able to replace their last move $s_2$ in $H_1$ for $s_1$ in the $H_2$ branch.
    \item[(ii)] they will be able to play a last move on the $H_2$ branch that they would not have been able to play on the greedy branch $H_1$ (they get one additional move on the $H_2$ branch), and the regime is not strictly dominant;
\end{itemize}
\end{lemma}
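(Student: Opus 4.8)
The plan is to reduce the dichotomy to a single inequality about two-action heaps whose sizes differ by $\Delta$, and then to read the two scenarios off a coupling of the two PSPE continuations, all organised by strong induction on $h$. Write the greedy branch $H_1=h-s_1$ and the sacrifice branch $H_2=h-s_2$, so that $H_2=H_1+\Delta$. In either branch the player who moved from $h$ becomes the \emph{second} player of the resulting subgame. Since the amount a player removes is chosen, above all, to maximise that player's own cumulation, the hypothesis that the sacrifice actually occurs in PSPE is equivalent to
\[ o^2(H_2)\ \ge\ o^2(H_1)+\Delta, \]
with equality exactly when the sacrifice is an indifferent, tie-broken move and strict inequality when it is strictly profitable (here $o^2(\cdot)$ is taken under whichever tie-breaking convention governs the subgame; the equivalence does not depend on which one). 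So it suffices to prove: whenever enlarging a two-action heap by exactly $\Delta$ increases the second player's PSPE cumulation by at least $\Delta$, that gain is realised on the second player's \emph{own} move sequence either by upgrading a terminal $s_2$-move to $s_1$ (scenario (i)), or by appending one additional terminal $s_2$-move (scenario (ii)), and the latter can happen only when $2s_2\le s_1$ is not strict.

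\emph{Base case and the endgame.} For the base of the induction take $h$ small enough that $H_2<s_1$, so that only $s_2$ is ever legal in either subgame: play is then a forced chain of $s_2$'s down to the slack, and the claim is checked by hand. Either $H_1$ and $H_2$ leave the same parity of ``$s_2$-room'', the sacrificer's sequence is unchanged above the endgame and its last forced $s_2$ is what becomes an $s_1$ (scenario (i)); or $H_2$ accommodates exactly one more $s_2$ than $H_1$ and, by the forced alternation, that extra move is the sacrificer's (scenario (ii)). Lemma~\ref{lem:sacr} forbids the latter in the strictly dominant regime, which is where the rider in (ii) comes from.

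\emph{Inductive step and coupling.} For larger $h$, compare the PSPE continuations from $H_1$ and $H_2$ through the opponent's first reply. If both continuations open with the same removal $s\in S$, recurse on the pair $(H_1-s,H_2-s)$, still a $\Delta$-shift, and invoke the induction hypothesis. If they open differently, finish by a direct comparison: First Player Advantage (Lemma~\ref{lem:start}) caps how much cumulation the opponent can divert in each branch, and Dominance and Greedy (Lemma~\ref{lem:sacr}) excludes any genuine restructuring of play in the dominant regime, so the surplus $\Delta$ must be absorbed near the end of the game; tracking the sacrificer's own moves then leaves exactly (i) or (ii), once more with (ii) forcing non-strict dominance.

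\emph{Main obstacle.} The crux is the coupling step. PSPE play is not greedy, so both players may cascade into sacrifices and the move records of $H_1$ and $H_2$ can differ globally --- already for $S=\{3,5\}$ and $h=14$ the opponent itself sacrifices, yet the net effect on the sacrificer is still a single $s_2\to s_1$ upgrade. What rescues the argument is a structural fact peculiar to $|S|=2$: a $+\Delta$ shift of a two-action heap perturbs PSPE play only inside the endgame and creates at most one extra move for either player. I would isolate this confinement property as an auxiliary claim and prove it by the same induction, interleaved with the main one; it is exactly this property that fails once $|S|>2$, consistent with the non-monotone example $S=\{4,5,9\}$ of Table~\ref{tab:deviation459} in the Appendix.
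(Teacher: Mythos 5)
Your reduction of the sacrifice hypothesis to the inequality $o^2(H_2)\ge o^2(H_1)+\Delta$ is correct and matches the paper's implicit starting point, and your overall shape (if the $\Delta$-shift is not absorbed by an upgraded final move or by one extra move, then greedy strictly dominates) is the same contrapositive the paper uses. But the proposal does not actually close the argument. You yourself name the crux --- the ``confinement property'' that a $+\Delta$ shift of a two-action heap perturbs PSPE play only in the endgame and yields the sacrificer at most one upgraded or one additional move --- and then defer it to ``an auxiliary claim [to be] proved by the same induction.'' That auxiliary claim \emph{is} the lemma: scenarios (i) and (ii) are precisely the two ways the confinement property allows the sacrificer to recoup $\Delta$. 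Likewise, in the inductive step the branch where the two continuations open with different replies is dispatched by ``the surplus $\Delta$ must be absorbed near the end of the game; tracking the sacrificer's own moves then leaves exactly (i) or (ii),'' which is an assertion of the conclusion rather than a derivation; Lemmas~\ref{lem:start} and~\ref{lem:sacr} bound utilities but say nothing about \emph{where} in the move sequence a discrepancy is realised. So as written the proposal is a proof plan with its hardest step unexecuted.

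Two smaller points. First, your base case ($H_2<s_1$) cannot produce scenario (i): if no heap below $H_2$ admits an $s_1$-move, no terminal $s_2$ can be upgraded to $s_1$ on the $H_2$ branch, so only scenario (ii) (one extra forced $s_2$, with the parity working out in the sacrificer's favour) can justify a sacrifice there; the sub-case you describe as (i) should be reworked or the base case restated. Second, in fairness, the paper's own proof of this lemma is no more rigorous at the critical juncture --- it asserts ``Then on the branch $H_2$ they cannot obtain an extra move and cannot upgrade a final $s_2$ to $s_1$. Thus, their subsequent utility from $H_2$ is the same as from $H_1$'' without proving that these are the only two mechanisms. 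Your proposal has the merit of isolating exactly what must be proved and of proposing a coupling-by-$\Delta$-shift induction that could in principle prove it; but until that interleaved auxiliary claim is carried out (and shown to fail gracefully only for $|S|>2$), the argument has a genuine gap.
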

\begin{proof}
Suppose that they sacrifice without either of the items being satisfied. Then play greedily on $H_1$, and get a larger utility, by at least $\Delta$ on this branch. 

More explicitly, consider (i). Here, their last move $s_2$ in $H_1$ is replaced by $s_1$ in $H_2$, giving them a gain of $\Delta$ in $H_2$. Meanwhile, the first move in $H_1$ and $H_2$ is $s_1$ and $s_2$ respectively. Therefore, in the first move, they have a loss of $\Delta$ in $H_2$. Thus, with the gain and loss balanced out in $H_1$ and $H_2$, both sacrifice and greedy play are in PSPE.

In case of (ii), the sacrificing player loses $\Delta = s_1-s_2$ in their first move. Since the slack, at the end of $H_1$, is strictly less than $s_2$, the gain in the added last move in $H_2$ is strictly less than $s_2 + \Delta = s_1$. Thus, the last move can only be $s_2$. Therefore, in the strictly dominant regime ($\Delta > s_2$), the $H_2$ branch gives a strictly lesser utility, and thus, sacrifice cannot be in PSPE. In a balanced regime ($\Delta = s_2$), a player performs equally in $H_1$ and $H_2$, and in a non-dominant regime ($\Delta <s_2$), the $H_2$ branch gives a strictly better utility. Thus, in both these cases, sacrifice play is in PSPE.  

Now, suppose a player sacrifices, but neither (i) nor (ii) applies. Then on the branch $H_2$ they cannot obtain an extra move and cannot upgrade a final $s_2$ to $s_1$. Thus, their subsequent utility from $H_2$ is the same as from $H_1$.  But they are behind by $\Delta=s_1-s_2$ already after the first move. Hence, the greedy branch $H_1$ gives a strictly larger utility. This contradicts the assumption that the sacrifice is played in PSPE. Therefore, in PSPE any sacrifice must be justified by either (i) or (ii).  
\end{proof}

\begin{lemma}[Tie-breaking Monotonicity, FvF]\label{lem:fvfdeviate}
    Consider $S=\{s_2,s_1\}$, with $s_2<s_1$. Then, for any heap size $h$, 
    \begin{enumerate}
        \item $o^1_{\fvf}(h)\ge o^1_{\fva}(h)$; 
        \item $o^2_{\fvf}(h)=o^2_{\fva}(h)$; 
        \item $o^1_{\fvf}(h) = o^1_{\avf}(h)$; 
        \item $o^2_{\fvf}(h) \ge o^2_{\avf}(h)$. 
    \end{enumerate}
\end{lemma}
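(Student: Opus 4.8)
The plan is to prove all four statements simultaneously by strong induction on the heap size $h$. The base cases $h < s_1$ are immediate: when $h < s_2$ all outcomes are $(0,0)$, and when $s_2 \le h < s_1$ every setting forces the move $s_2$ with no indifference, so all four outcome pairs agree. For the inductive step, fix $h \ge s_1$ and assume the four claims hold for all smaller heaps. The key observation is that in the FvF setting the current player (say Alice) maximizes $o^2_{\fvf}$ among her utility-maximizing moves, whereas in FvA she maximizes among utility-maximizing moves but the resulting subgame is evaluated under $\fva$'s dual, which is $\avf$; symmetrically, AvF passes to $\fva$. So the recursion couples the four functions in exactly the pattern the lemma asserts, and the induction hypothesis gives us, at every smaller heap $H_1 = h - s_1$ and $H_2 = h - s_2$, the pointwise comparisons $o^1_{\fvf} \ge o^1_{\fva}$, $o^2_{\fvf} = o^2_{\fva}$, $o^1_{\fvf} = o^1_{\avf}$, $o^2_{\fvf} \ge o^2_{\avf}$.

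First I would handle claims (3) and (4), the FvF versus AvF comparison (Alice deviates to antagonistic). Here Alice's own utility is $o^1_X(h) = \max\{o^2_{\dual X}(H_i) + s_i\}$. In FvF this is $\max\{o^2_{\fvf}(H_i)+s_i\}$; in AvF, since $\dual{\avf}=\fva$, it is $\max\{o^2_{\fva}(H_i)+s_i\}$. By induction hypothesis claim (2), $o^2_{\fvf}(H_i)=o^2_{\fva}(H_i)$ for $i=1,2$, so these maxima are equal and the set of utility-maximizing moves is identical — this gives claim (3). For claim (4): Alice's indifference set $S^*(h)$ is the same in both settings by claim (3); in FvF she picks $s^*$ maximizing $o^1_{\fvf}(H_{\cdot})$ among them, in AvF she picks $s^*$ minimizing $o^1_{\fva}(H_{\cdot}) = o^1_{\fvf}(H_{\cdot})$ (equal by induction hypothesis claim (3)) among them. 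Then $o^2_{\fvf}(h) = o^1_{\fvf}(H_{\text{argmax}})$ while $o^2_{\avf}(h) = o^1_{\fva}(H_{\text{argmin}}) = o^1_{\fvf}(H_{\text{argmin}}) \le o^1_{\fvf}(H_{\text{argmax}})$, which is exactly claim (4). The symmetric argument, with the roles of the two players swapped and using induction hypothesis claims (3) and (4) on the subgames, yields claims (1) and (2) for the FvA deviation: Alice's utility in FvA is $\max\{o^2_{\avf}(H_i)+s_i\} \le \max\{o^2_{\fvf}(H_i)+s_i\}$ by induction hypothesis claim (4), giving claim (1) — but one must then check that claim (2), equality of the \emph{other} player's utility, still holds despite this possible strict drop.

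The main obstacle is precisely reconciling a strict inequality in claim (1) with the required equality in claim (2). If Alice's maximum utility genuinely drops when passing from $\fvf$ to $\fva$ on the subgames, her set of optimal first moves can change, and it is not obvious that the second player's PSPE utility is unaffected. This is where Lemmas~\ref{lem:start} and~\ref{lem:lastmove} enter, together with the two-action structure: I expect to argue that any move which is utility-maximizing for Alice in the $\fva$ subgame-evaluation but not in the $\fvf$ one must be a sacrifice, and then invoke the last-move/sacrifice dichotomy (cases (i) and (ii) of Lemma~\ref{lem:lastmove}) to show that the gain-loss bookkeeping across the $H_1$ and $H_2$ branches forces the second player's cumulation to coincide with its FvF value. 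Here the constraint $|S|=2$ is essential — it is exactly the ingredient that fails for $|S|>2$, consistent with the counterexample $S=\{4,5,9\}$ promised in the Appendix. I would also need to be careful that the "argmax/argmin over $S^*(h)$" notation is well-posed when $S^*(h)$ has two elements with equal subgame utilities, in which case the footnote's remark that the choice is immaterial closes the gap.
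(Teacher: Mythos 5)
Your inductive architecture is the same as the paper's, and the parts you carry out in full are essentially correct. Indeed, your treatment of claims (3) and (4) is cleaner than the paper's: since $\dual{\FvF}=\FvF$ and $\dual{\AvF}=\FvA$, the first player's value in both settings is a maximum of $s+o^2(h-s)$ over the same two numbers once induction hypothesis (2) identifies $o^2_{\fvf}$ with $o^2_{\fva}$ on the subheaps; hence the maxima and the indifference sets coincide, and (3) follows without the paper's detour through a contradiction. One miscitation in your claim (4): the relation you need between $o^1_{\fva}$ and $o^1_{\fvf}$ at smaller heaps is the \emph{inequality} of claim (1), not the equality of claim (3) (which compares $\fvf$ with $\avf$, not with $\fva$); fortunately the inequality points the right way, so the chain $\min\le\max$ still closes. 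Claim (1) via induction hypothesis (4) is also fine.

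The genuine gap is claim (2), which is the substantive content of the lemma, and your proposal stops at announcing a plan for it. When Alice's optimal first move in FvF differs from her optimal first move in FvA (one of the two settings has her sacrificing --- and note it can go either way, she may sacrifice in FvA but also in FvF), one must prove that the second player's value is nevertheless unchanged, which the paper reduces to the equality $o^1_{\avf}(h-s_1)=o^1_{\avf}(h-s_2)$. The paper establishes this in two halves: a move-coupling (mimicking) argument showing Bob cannot do strictly better on the smaller branch $H_1$, and an argument combining Lemma~\ref{lem:sacr}, Lemma~\ref{lem:lastmove} and parity of last moves showing he cannot do strictly better on the larger branch $H_2$ either. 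You correctly name Lemma~\ref{lem:lastmove} and the two-action hypothesis as the relevant ingredients, but ``the gain-loss bookkeeping forces the second player's cumulation to coincide'' is a statement of intent, not an argument; in particular nothing in your sketch addresses why Bob's value cannot strictly increase on $H_2$ when Alice's sacrifice buys her an extra last move there. Until that equality is actually established, claim (2) --- and with it the route to Theorem~\ref{thm:main} --- remains unproved.
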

\begin{proof}
    The statement holds for $h<s_2$. Suppose the result holds for all heap sizes smaller than $h\ge s_2$. 
     Alice has at least one PSPE move in FvF; denote this by $s'$, and similarly, for a given $X\in \{\FvA,\AvF\}$, she has at least one PSPE move; denote this by $s''$. (Here $s'$ and $s''$ may differ or be the same.) Thus, we have:
    \begin{itemize}
        \item $o^1_{\fvf}(h)= s'+o^2_{\fvf}(h-s')$; 
        \item $o^2_{\fvf}(h)=o^1_{\fvf}(h-s')$; 
        \item $o^1_{X}(h)=s''+o^2_{\X}(h-s'')$; 
        \item $o^2_{X}(h)=o^1_{\X}(h-s'')$. 
    \end{itemize}
    
    Hence, with regard to the first two items in the statement (so $X=\FvA$), it suffices to prove that 
    \begin{enumerate}
        \item[(i)] $s'+o^2_{\fvf}(h-s')\ge s''+o^2_{\avf}(h-s'')$; 
        \item[(ii)] $o^1_{\fvf}(h-s')=o^1_{\avf}(h-s'')$.
    \end{enumerate}
    By induction, using item (4) in the statement, we have that $o^2_{\fvf}(h-s'')\ge o^2_{\avf}(h-s'')$, and thus, by the PSPE assumption on $s'$ in FvF, the inequality in item (i) holds.\\

    \noindent To prove the equality in item (ii), observe that if $s'=s''$, then the equality holds by induction, since, by item (3) in the statement, we may replace $o^1_{\fvf}(h-s')$ with $o^1_{\avf}(h-s')$. We study two cases when the actions differ.\\
    
    \noindent Case 1, $s'>s''$: Suppose that $s'=s_1$ and $s''=s_2$, corresponding to Alice's choices, playing from $h$ in FvF and FvA, respectively, and let $H_1=h-s_1$ and $H_2=h-s_2$. By induction, we may identify $o^1_{\fvf}(H_1)=o^1_{\avf}(H_1)$, and study both options in the same convention, say AvF. Thus, it suffices to prove that 
    \begin{align}\label{eq:avf_avf}
        o^1_\avf(H_1)=o^1_{\avf}(H_2),
    \end{align} 
    given that Alice sacrificed in her first PSPE move from $h$ in FvA. Recall, Bob plays first from the options of $h$. 
    
    First, we prove that Bob cannot do strictly better on the smaller heap $H_1=H_2-\Delta$ in PSPE. This follows because he can mimic (couple) all moves from the smaller heap to the larger heap. If Alice at some stage plays $s_1$ on $H_2$ and $s_2$ on $H_1$, then this coupled move pair compensates exactly the initial $\Delta$ difference, so the heaps are equalized. And, given PSPE play by Alice on both heaps, since they follow the same tie-breaking convention  \eqref{eq:avf_avf}, Bob's mimic strategy obviously remains a valid strategy. (If Alice plays a smaller move on $H_2$, but the larger one on $H_1$, Bob's strategy remains feasible.)
    
Now we show that Bob cannot do strictly better on the larger heap $H_2$.  Alice sacrificed in her first PSPE move on $H_2$, so we know that she does not have a parity disadvantage on that heap. Let us argue first in the case of dominance. By Lemma~\ref{lem:sacr}, Lemma~\ref{lem:lastmove} (ii) applies, and so the initial $\Delta$ loss will be compensated by $\Delta$ in the endgame. In case of non-dominance,  Lemma~\ref{lem:lastmove} (i) might apply. But in this case $\Delta < s_2$, so Bob's moves on the larger heap can be coupled on the smaller heap, because Alice's share of the larger heap increased by $s_2$ due to parity advantage.\\
 
\noindent Case 2, $s'<s''$: Suppose next that $s' = s_2$ and $s'' = s_1$. By the induction argument in Case 1, again it suffices to justify  \eqref{eq:avf_avf}, which is already established.\\

\noindent Regarding the last two items in the statement (so $X=\AvF$), it suffices to prove that 
\begin{enumerate}
    \item[(iii)] $s'+o^2_{\fvf}(h-s') = s''+o^2_{\fva}(h-s'')$; 
    \item[(iv)] $o^1_{\fvf}(h-s')\ge o^1_{\fva}(h-s'')$. 
\end{enumerate}
For item (iii), by induction, we have that 
\begin{align}\label{eq:o2s''}
    o^2_{\fvf}(h-s'')=o^2_{\fva}(h-s'')
\end{align}
and 
\begin{align}\label{eq:o2s'}
    o^2_{\fvf}(h-s')=o^2_{\fva}(h-s'). 
\end{align}

Hence item~(iii) holds, whenever Alice is indifferent between her move options from $h$ in FvF or AvF, or if $s'=s''$. Thus, the only case we must rule out is whenever the two settings have differing single-move  PSPE options. But then $s'+o^2_{\fvf}(h-s') > s''+o^2_{\fvf}(h-s'')$ and $s'+o^2_{\fva}(h-s') < s''+o^2_{\fva}(h-s'')$, where $s'$ and $s''$ are PSPE in FvF and FvA, respectively. Thus, by induction, using \eqref{eq:o2s''} and \eqref{eq:o2s'}, $s'+o^2_{\fvf}(h-s') > s''+o^2_{\fvf}(h-s'')$ and $s'+o^2_{\fvf}(h-s') < s''+o^2_{\fvf}(h-s'')$, a contradiction. 

For item (iv), the proof is analogous to the second part of (ii), but using inequality instead in \eqref{eq:avf_avf} via the induction step.

\end{proof}

We have an analogous lemma concerning unilateral friendly deviation from the AvA setting. 

\begin{lemma}[Tie-breaking Monotonicity,  AvA]\label{lem:avadeviate}
    Consider $S=\{s_2,s_1\}$, with $s_2<s_1$. Then, for any heap size $h$, 
    \begin{enumerate}
        \item $o^1_{\ava}(h)= o^1_{\fva}(h)$; 
        \item $o^2_{\ava}(h)\le o^2_{\fva}(h)$; 
        \item $o^1_{\ava}(h) \le o^1_{\avf}(h)$; 
        \item $o^2_{\ava}(h) = o^2_{\avf}(h)$. 
    \end{enumerate}
\end{lemma}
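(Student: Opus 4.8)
The plan is to mirror the structure of the proof of Lemma~\ref{lem:fvfdeviate} almost verbatim, exploiting the duality map $X\mapsto\X$. The key observation is that the AvA setting is the dual of FvF, FvA is self-dual-partnered with AvF, and the recursive definition in Definition~\ref{def:outcomes} treats $\arg\min$ (for $X\in\{\AvF,\AvA\}$) exactly the way it treats $\arg\max$ for the friendly conventions, so every inequality in Lemma~\ref{lem:fvfdeviate} gets its direction reversed when we pass to duals. Concretely, I would set up the induction on $h$: the statement is trivial for $h<s_2$, and for $h\ge s_2$ I let $s'$ be a PSPE move for the relevant antagonistic player in AvA and $s''$ a PSPE move in $X\in\{\FvA,\AvF\}$, obtaining the four identities $o^1_\ava(h)=s'+o^2_\ava(h-s')$, $o^2_\ava(h)=o^1_\ava(h-s')$, $o^1_X(h)=s''+o^2_\X(h-s'')$, $o^2_X(h)=o^1_\X(h-s'')$, exactly as before.

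For items (1) and (2) (the deviation $\AvA\to\FvA$), it suffices to show $s'+o^2_\ava(h-s')=s''+o^2_\fva(h-s'')$ and $o^1_\ava(h-s')\le o^1_\avf(h-s'')$. The first is an equality-of-first-player-utilities statement that I would establish by the same contradiction argument as item (iii) of Lemma~\ref{lem:fvfdeviate}: using the induction hypothesis items (1) and (2) to replace $o^2_\ava$ by $o^2_\fva$ and vice versa on the subheaps, any discrepancy in the single-move PSPE options forces two contradictory strict inequalities. The inequality $o^1_\ava(h-s')\le o^1_\avf(h-s'')$ splits into the case $s'=s''$ (immediate from induction item (3)) and the cases $s'\ne s''$, which reduce — after using induction to move both options into a common convention — to the claim $o^1_\avf(H_1)=o^1_\avf(H_2)$ for a sacrifice, i.e.\ to equation~\eqref{eq:avf_avf}, which is already available from the proof of Lemma~\ref{lem:fvfdeviate} since it is a statement purely about the AvF convention. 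For items (3) and (4) (the deviation $\AvA\to\AvF$), I would symmetrically show $s'+o^2_\ava(h-s')\le s''+o^2_\fva(h-s'')$ and $o^1_\ava(h-s')=o^1_\fva(h-s'')$; the inequality follows from induction item (2) (now $o^2_\ava(h-s'')\le o^2_\fva(h-s'')$) together with the PSPE-optimality of $s'$ in AvA, and the equality is handled by the same two-case sacrifice analysis, invoking Lemma~\ref{lem:sacr} and Lemma~\ref{lem:lastmove} exactly as in Case~1 of Lemma~\ref{lem:fvfdeviate}.

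The one genuine subtlety — and the step I expect to need the most care — is making sure the dualization is applied consistently throughout: in the friendly lemma the ``good'' player (the one whose utility is unaffected) is the deviator, and the coupling/mimicking argument that proves Bob cannot do strictly better on $H_2$ relies on Alice not having a parity disadvantage on a heap where she sacrificed. In the AvA lemma the roles of ``who sacrifices'' and ``whose utility we track'' are swapped by the duality, so I must double-check that the sacrifice in question is indeed being made by the player to whom Lemma~\ref{lem:lastmove} and the parity-advantage bound ($\Delta<s_2$ in the non-dominant regime, endgame compensation in the dominant regime) can be applied in the same direction. Since Lemma~\ref{lem:lastmove} is stated ``independently of tie-breaking rules'' and Lemma~\ref{lem:sacr} likewise holds under any deterministic tie-breaking rule, these tools transfer without change, and the coupling argument for $o^1_\avf(H_1)=o^1_\avf(H_2)$ is convention-internal; so once the bookkeeping of which inequality points which way is verified, the proof goes through line-for-line as the dual of Lemma~\ref{lem:fvfdeviate}. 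I would therefore write the proof as: ``The proof is the dual of that of Lemma~\ref{lem:fvfdeviate}, obtained by replacing every convention by its dual and reversing every inequality; we spell out the points where duality must be tracked,'' and then reproduce the three or four lines above.
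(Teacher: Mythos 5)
Your proposal matches the paper exactly: the paper's entire proof of this lemma is the single sentence that it is analogous to the proof of Lemma~\ref{lem:fvfdeviate}, and your dualized, line-by-line adaptation (swapping $\FvF\leftrightarrow\AvA$, $\FvA\leftrightarrow\AvF$, reversing the inequalities, and reusing the convention-independent sacrifice lemmas and the coupling argument for \eqref{eq:avf_avf}) is precisely that analogy made explicit. The only caveat is a couple of bookkeeping slips in your sketch --- e.g.\ item (1) unpacks to $s'+o^2_{\ava}(h-s')=s''+o^2_{\avf}(h-s'')$ (since $\dual{(\FvA)}=\AvF$), so the relevant induction hypothesis there is item (4) rather than items (1)--(2), and the inequality for item (3) uses PSPE-optimality of $s''$ in $\AvF$ together with induction item (2) applied at $h-s'$ --- but these are exactly the dualization details you flagged as needing verification, and they do not affect the structure or validity of the argument.
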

\begin{proof}
This is analogous to the proof of Lemma~\ref{lem:fvfdeviate}.
\end{proof}
Recall that the main result Theorem~\ref{thm:main} concerns subtraction sets of size two, and we have to prove that, if both players act friendly in case of indifference, then the outcome is never worse than if both players have antagonistic tie-breaking rules. 

\begin{proof}[Proof of Theorem~\ref{thm:main}]
Combine Lemmas~\ref{lem:fvfdeviate} and~\ref{lem:avadeviate}.
\end{proof}

In the case of dominance, we arrive at a stronger version of the main theorem.

\begin{proposition}\label{prop:equality}
    Consider $S = \{s_2, s_1\}$, in the dominant regime when $2s_2 \le  s_1$. Then there is no heap $h$ for which the $\FvF$ and $\AvA$ settings have a discrepancy. In more generality, for any heap size $h$, and any $X,Y \in \tau$, $o_{X}(h)= o_{Y}(h)$, i.e., 
    \begin{itemize}
        \item $o^1_{X}(h)= o^1_{Y}(h)$; 
        \item $o^2_{X}(h) = o^2_{Y}(h)$. 
    \end{itemize}
\end{proposition}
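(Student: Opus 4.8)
The plan is to prove the stronger displayed claim by strong induction on $h$: for every $h\ge 0$ and every $X\in\tau$, $o_X(h)$ equals one convention‑independent pair $g(h)=(g^1(h),g^2(h))$, namely the outcome of \emph{greedy} play (always remove $s_1$ when feasible, otherwise the forced $s_2$; and nothing below $s_2$). The point of passing to $g$ is that it obeys the clean recursion $g^1(h)=s+g^2(h-s)$, $g^2(h)=g^1(h-s)$ with $s=s_1$ if $h\ge s_1$ and $s=s_2$ otherwise, and that $g^1(h)+g^2(h)=h-r(h)$, where the slack is $r(h)=(h\bmod s_1)\bmod s_2\in\{0,\dots,s_2-1\}$; both facts are immediate from the definition of greedy play. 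For the induction step at $h\ge s_2$, Definition~\ref{def:outcomes} together with the inductive hypothesis gives $o^1_X(h)=\max_{s\in S(h)}\{s+g^2(h-s)\}$, and Lemma~\ref{lem:sacr} (no strict profit from a sacrifice in the dominant regime) puts the maximizer at the greedy move, so $o^1_X(h)=g^1(h)$ for every $X$. For $o^2_X(h)$, if $h<s_1$, or if $h\ge s_1$ but $s_2+g^2(h-s_2)<s_1+g^2(h-s_1)$, the PSPE move is forced or unique and the recursion gives $o^2_X(h)=g^2(h)$; the only remaining case is \emph{indifference}, meaning $h\ge s_1$ and $s_2+g^2(h-s_2)=s_1+g^2(h-s_1)$, where a friendly versus an antagonistic mover might select different PSPE moves $s_1$ or $s_2$, giving $o^2_X(h)\in\{g^1(h-s_1),g^1(h-s_2)\}$. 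So the whole proposition reduces to one lemma: \textbf{in the dominant regime, indifference at $h$ forces $g^1(h-s_1)=g^1(h-s_2)$}; granting it, $o^2_X(h)=g^2(h)$ again, irrespective of $X$.

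To prove the lemma, set $H_1=h-s_1$ and $H_2=h-s_2=H_1+\Delta$. The greedy recursion gives $g^1(h)=s_1+g^2(H_1)$ and $g^1(H_1)=g^2(h)$, so the indifference hypothesis rewrites as $g^2(H_2)=g^1(h)-s_2$; combining this with $g^1(H_2)+g^2(H_2)=H_2-r(H_2)$ and with $g^1(h)+g^2(h)+r(h)=h$, the terms $g^1(h)$ and $g^2(h)$ cancel and one is left with $g^1(H_2)-g^1(H_1)=r(h)-r(H_2)$. Hence it suffices to show $r(h)=r(h-s_2)$. Write $\rho=h\bmod s_1$ (so $r(h)=\rho\bmod s_2$, and $\lfloor h/s_1\rfloor\ge 1$ since $h\ge s_1$). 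If $\rho\ge s_2$ then $(h-s_2)\bmod s_1=\rho-s_2\equiv\rho\pmod{s_2}$, so $r(h-s_2)=r(h)$. If $\rho<s_2$ then $r(h)=\rho$, while $(h-s_2)\bmod s_1=s_1+\rho-s_2$ and hence $r(h-s_2)=(s_1-s_2+\rho)\bmod s_2=(t+\rho)\bmod s_2$ with $t=s_1\bmod s_2$, which equals $\rho$ exactly when $t=0$. Finally, in the leftover case $\rho<s_2$ and $t\neq 0$ I claim indifference simply cannot occur: greedy play from $H_1$ and from $H_2$ begins with the same $\lfloor h/s_1\rfloor-1$ removals of $s_1$ and thereafter removes only copies of $s_2$ on the $H_2$ side (none on the $H_1$ side), so $g^2(H_2)-g^2(H_1)$ is a multiple of $s_2$, whereas $\Delta=s_1-s_2\equiv t\not\equiv 0\pmod{s_2}$ is not — contradicting the indifference requirement $g^2(H_2)-g^2(H_1)=\Delta$.

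I expect the only genuinely fiddly part to be the bookkeeping around the slack function $r$: verifying $r(h)=(h\bmod s_1)\bmod s_2$ and running the case split on $\rho=h\bmod s_1$ versus $s_2$. Everything else is routine once the problem has been reduced to the implication ``indifference at $h$ $\Rightarrow$ $g^1(H_1)=g^1(H_2)$'', and the two displayed equalities in the statement — in particular the absence of any $\FvF$ versus $\AvA$ discrepancy — follow at once from $o_X(h)=g(h)$ for all $X\in\tau$.
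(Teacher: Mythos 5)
Your proof is correct, but it takes a genuinely different route from the paper's. The paper's argument is strategic: it invokes Lemma~\ref{lem:sacr} to place greedy play in PSPE and then appeals to Lemma~\ref{lem:lastmove}~(i)/(ii) to argue that an indifferent player's sacrifice ``loses $\Delta$ on the first move and regains $\Delta$ on the last,'' so the opponent's utility on the $H_2$ branch matches that on $H_1$; the whole proof is a few lines and leans on the somewhat informal last-move lemma. You instead prove the stronger invariant $o_X(h)=g(h)$ for all $X\in\tau$ by induction, where $g$ is the explicit greedy outcome, and you isolate the real content of the proposition as a single arithmetic fact: indifference at $h$ forces $g^1(h-s_1)=g^1(h-s_2)$. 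Your reduction of that fact to $r(h)=r(h-s_2)$ via the closed-form slack $r(h)=(h\bmod s_1)\bmod s_2$, together with the parity/divisibility argument ruling out indifference when $h\bmod s_1<s_2$ and $s_1\not\equiv 0\pmod{s_2}$, checks out (I verified the algebra $g^1(H_2)-g^1(H_1)=r(h)-r(H_2)$ and the case split on $\rho=h\bmod s_1$). What your approach buys is self-containedness and rigor: you bypass Lemma~\ref{lem:lastmove} entirely, you only use Lemma~\ref{lem:sacr} for the first-player component (where, under your inductive hypothesis, it amounts to the concrete inequality $s_2+g^2(h-s_2)\le s_1+g^2(h-s_1)$, which you could also have derived directly from the slack identity), and you obtain an explicit formula for the common outcome. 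The cost is the modular bookkeeping you flag, whereas the paper's version conveys the structural reason for the equality more directly.
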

\begin{proof}
    From Lemma \ref{lem:sacr}, we know that in the dominant regime, a player cannot strictly benefit by sacrificing. We claim that greedy play is in PSPE independently of tie-breaking rules.

    Consider the $H_2$ branch, where the previous player (Alice) has made a sacrifice. Let there be strict dominance. Thus, we may assume that we are in case Lemma~\ref{lem:lastmove} (i), which implies that Alice lost $\Delta$ in her first move, and regained $\Delta$ in her last move. Thus, Bob's PSPE utility remains the same as in the $H_1$ branch, since he does not gain anything from Alice's sacrifice, irrespective of the tie-breaking rules. 
 
 In case of the balanced regime, both  Lemma~\ref{lem:lastmove} (i) and (ii) are possible, but in either case, the argument is the same as in the previous paragraph, since there is no change in Alice's utility in both cases. 
\end{proof}

\section{Friendly vs. antagonistic discrepancies}\label{sec:FvFvAvA}
We are interested in comparing outcomes between the $\FvF$ and $\AvA$ settings, specifically when the subtraction set $S$ has exactly two actions. In this scenario, the first such discrepancy between the outcomes appears when $S = \{3, 5\}$ and $h = 14$, as demonstrated in Table~\ref{tab:3_5}.

Further computer simulations of this scenario yield a list of two-action subtraction sets $S=\{s_2,s_1\}$ for which there exists a heap where the $\FvF$ and $\AvA$ outcomes differ. Our observations are summarized in Figure~\ref{fig:2-action_friendly_antagonistic}.

\begin{figure}[h!] 
    \centering
    \includegraphics[width=0.5\linewidth]{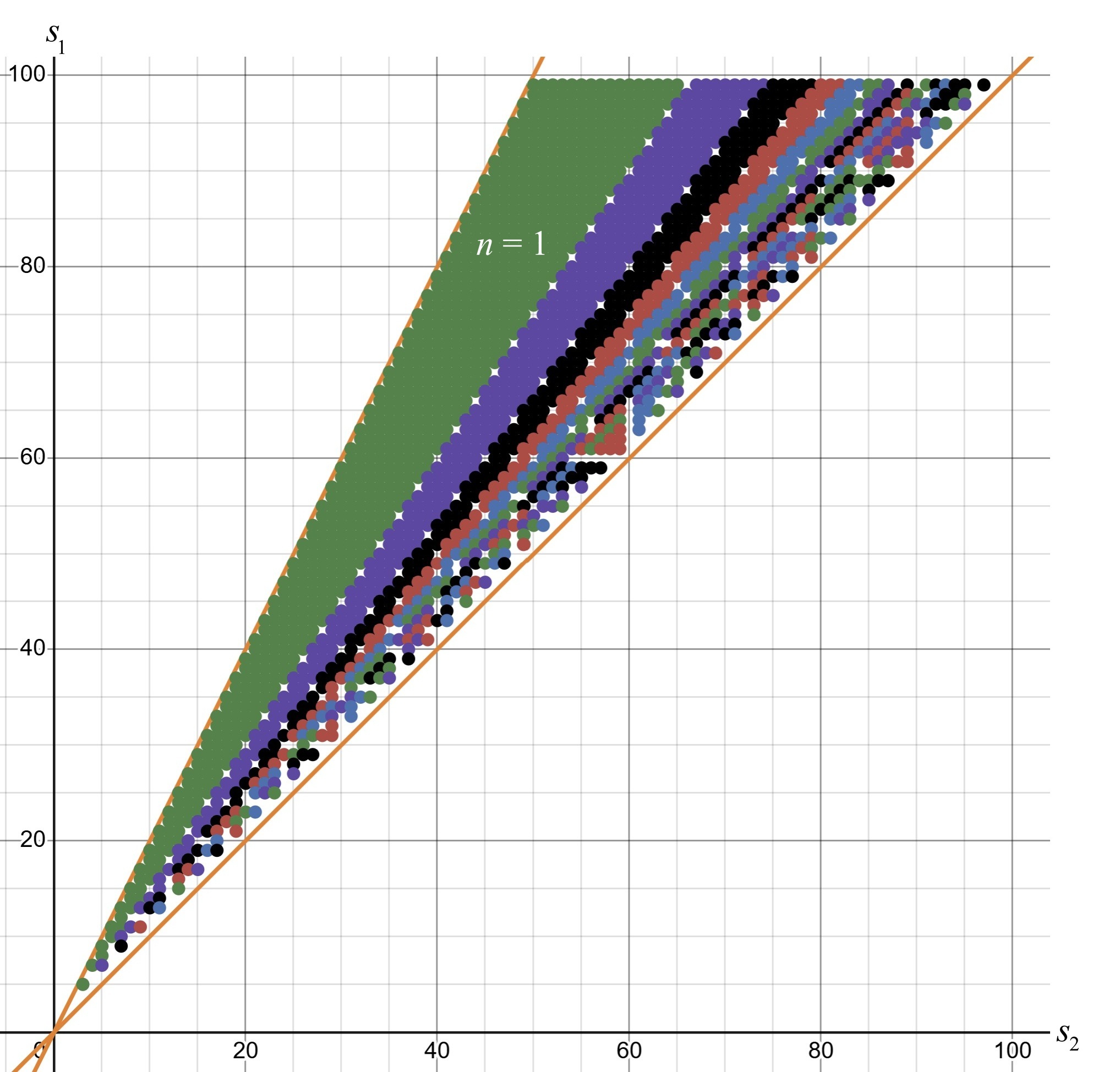}
    \caption{Rulesets $S = \{s_2, s_1\}$ (visualized as points $(s_2, s_1)$, with $s_2 < s_1$) for which there exists a heap with a antagonistic-friendly discrepancy. The colors indicate the value of $n$ in (\ref{eq:smallestheap}) modulo 5  for the given subtraction set (e.g., the large green area indicates $n = 1$, purple is $n = 2$, and so on up to $n = 48$). An online version can be found \href{https://www.desmos.com/calculator/erdndez4ey}{here}.}\label{fig:2-action_friendly_antagonistic}
\end{figure}

We note some distinct features within Figure~\ref{fig:2-action_friendly_antagonistic}: 
\begin{enumerate}
    \item[(1)]  no discrepancies occur in the region above $s_1 = 2s_2$,
    \item[(2)] a number of discrepancies occur with subtraction sets in the region below $s_1 = 2s_2$, and 
    \item[(3)] the subtraction sets with no discrepancy in the region below $s_1 = 2s_2$, occure on half-lines radiating from the origin. 
\end{enumerate}
%(1) no discrepancies occur in the region above $s_1 = 2s_2$, and (2) a number of discrepancies occur with subtraction sets in the region below $s_1 = 2s_2$, (3) the subtraction sets with no discrepancy in the region below $s_1 = 2s_2$, occure on half-lines radiating from the origin. 

The first observation is a direct consequence of Proposition \ref{prop:equality}, while the latter two observations underpin the following conjectures. 

\begin{conjecture} Consider $S = \{s_2, s_1\}$ where $\frac{s_1}{s_2} = \frac{k + 1}{k}$, for some $k \in \mathbb{N}$. Then there is no heap $h$ that distinguishes the outcomes of the two conventions $\FvF$ and $\AvA$.% games have a discrepancy. 
\end{conjecture}

\begin{conjecture}\label{con:halfline}  
Consider $S = \{s_2, s_1\}$ in the non-dominant regime where $s_1 < 2s_2$, and supppose, for all $k \in \mathbb{N}$, $\frac{s_1}{s_2} \ne \frac{k + 1}{k}$. Then there exists a heap $h$ that distinguishes the outcomes of the $\FvF$ and $\AvA$ conventions. The smallest such heap is defined by:
\begin{align}\label{eq:smallestheap}
h = (n + 2)s_2 +ns_1,
\end{align}
where $n \in \mathbb{N}$ satisfies $\frac{n + 1}{n} > \frac{s_1}{s_2} > \frac{n + 2}{n + 1}$.
\end{conjecture}
Note that, in Conjecture~\ref{con:halfline}, the existence and uniqueness of $n$ is clear, namely $n=\left\lfloor \frac{s_2}{s_1-s_2}\right\rfloor$. 
When we extend the methodology used in Figure~\ref{fig:2-action_friendly_antagonistic} to three-action games $S = \{s_3, s_2, s_1\}$, we use the convention $s_3 < s_2 < s_1$, and a number of new trends emerge; see Figure~\ref{fig:3-action_friendly_antagonistic}.

\begin{figure}[h!] 
    \centering
    \includegraphics[width=0.5\linewidth]{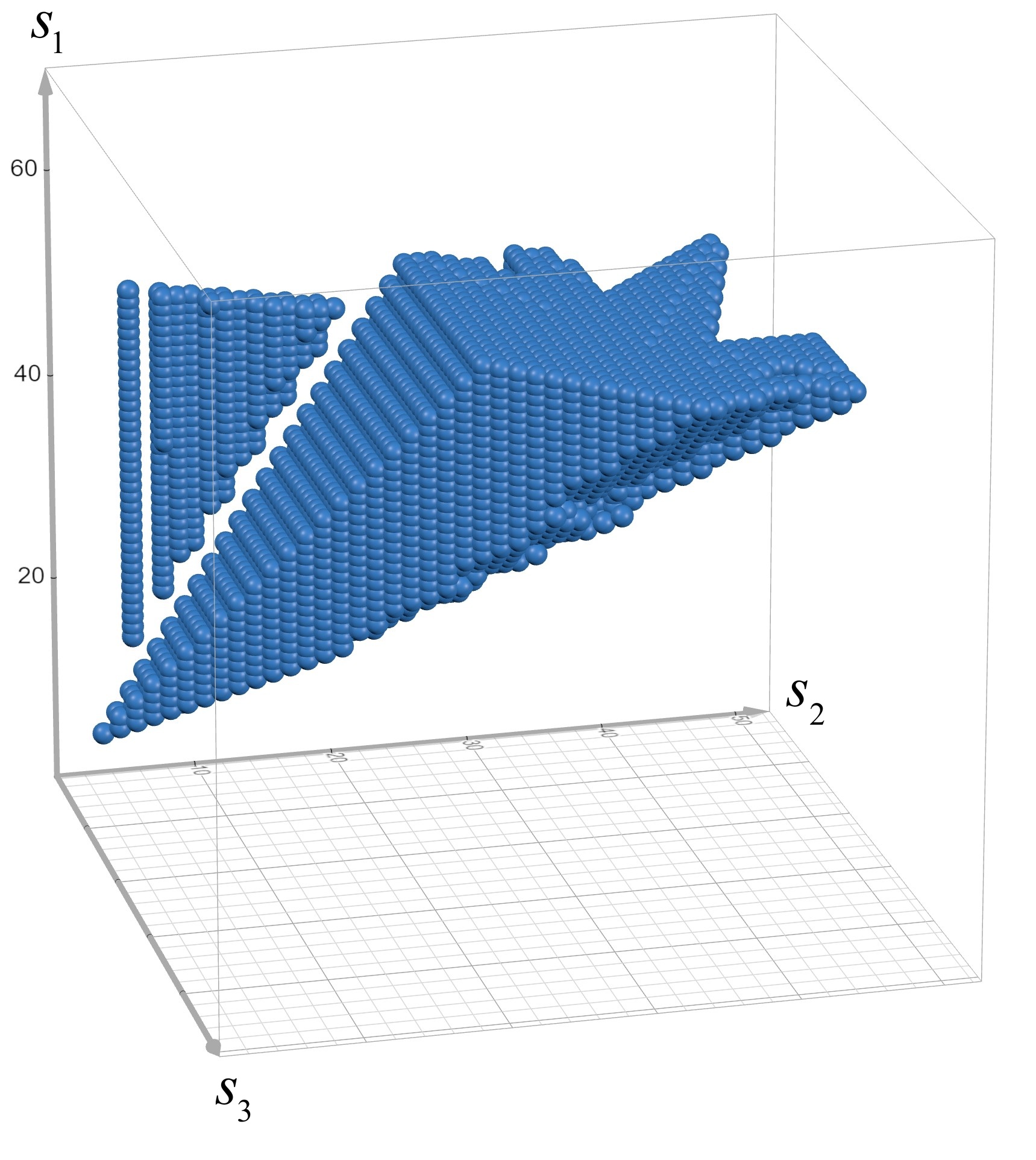}
    \caption{Points $(s_3, s_2, s_1)$ where there exists a heap with a antagonistic-friendly discrepancy for $S = \{s_3, s_2, s_1\}$, with $s_3<s_2<s_1$. An online rotatable version can be found \href{https://www.desmos.com/3d/38511dd74c}{here}.}\label{fig:3-action_friendly_antagonistic}
\end{figure}
\begin{conjecture} Consider $S = \{s_3, s_2, s_1\}$, with $s_1 \ge 2s_2$, and let $S' = \{s_3, s_2\}$. If there is no discrepancy between $\FvF$ and $\AvA$ in $S'$, then there is no discrepancy in $S$.
\end{conjecture}
Let us study the so-called {\em additive} rulesets. 
\begin{conjecture}
Consider $S = \{s_3, s_2, s_1\}$ where $s_1 = s_3 + s_2$. Then, there exists a heap $h$ with a discrepancy, if and only if $\frac{s_2}{s_3} \ne k$, for any $k \in \mathbb{N}$. The smallest such heap is:
$$h = (n + 2)s_2 + ns_1 = ns_3 + (2n + 2)s_2,$$
for $n \in \mathbb{N}$ satisfying $n + 1 > \frac{s_2}{s_3} > n$.
\end{conjecture}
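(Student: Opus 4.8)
The plan is threefold: (i) reduce to the coprime case by dividing $S$ by $d:=\gcd(S)$; (ii) settle the ``only if'' direction by a slack-free collapse of the recursion; and (iii) prove the ``if'' direction by building an explicit normal form for the outcome function of the reduced game up to the claimed threshold. For (i): since $s_1=s_2+s_3$ we have $d=\gcd(s_2,s_3)$, and we write $S=d\cdot S'$ with $S'=\{a,b,a+b\}$, where $a=s_3/d<b=s_2/d$ and $\gcd(a,b)=1$. A routine induction on $h$, using $S(h)=d\cdot S'(\lfloor h/d\rfloor)$ (all moves are multiples of $d$), gives $o^i_X(h)=d\cdot o^i_X(\lfloor h/d\rfloor)$ for every $X\in\tau$; in particular the indifference sets of Definition~\ref{def:outcomes}, hence the tie-break choices, correspond under $h\mapsto\lfloor h/d\rfloor$. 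So a discrepancy occurs at $h$ for $S$ iff one occurs at $\lfloor h/d\rfloor$ for $S'$, and the first discrepancy heap of $S$ is $d$ times that of $S'$. Since $s_2/s_3=b/a$, the hypothesis $s_2/s_3\notin\mathbb N$ is exactly $a\ge2$, and for $i=\lfloor b/a\rfloor$ one checks $d(ia+(2i+2)b)=is_3+(2i+2)s_2=(i+2)s_2+is_1$. Hence it suffices to prove: $S'=\{a,b,a+b\}$ with $\gcd(a,b)=1$, $a<b$, has a friendly/antagonistic discrepancy iff $a\ge2$, and when $a\ge2$ the first one is at $x^\star:=ia+(2i+2)b$.

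The case $a=1$, which is the ``only if'' direction, is easy and I would give it in full. Here $1\in S'$, so a move is available from every positive heap and the leftover slack is always zero; a short induction over all $X\in\tau$ simultaneously gives $o^1_X(h)+o^2_X(h)=h$ for every $h$. Substituting $o^2_{\X}(h-s)=(h-s)-o^1_{\X}(h-s)$ into Definition~\ref{def:outcomes} collapses the recursion to
\[
o^1_X(h)=h-\min_{s\in S'(h)}o^1_{\X}(h-s),\qquad o^2_X(h)=\min_{s\in S'(h)}o^1_{\X}(h-s),
\]
because the $s$ minimizing $o^1_{\X}(h-s)$ are exactly the indifferent actions, on which $o^1_{\X}(h-s)$ is constant, so whichever one the tie-break picks yields the same $o^2_X(h)$. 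A second induction on $h$ then shows $o^1_X(h)$ --- hence $o^2_X(h)$ --- does not depend on $X$; in particular $o_\fvf(h)=o_\ava(h)$ for all $h$, so there is no discrepancy.

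For $a\ge2$ the slack is genuinely present, and this is the substantive direction. The aim is a normal form for PSPE play of $\{a,b,a+b\}$ on $\{0,1,\ldots,x^\star\}$, proved by induction on the heap size, establishing: (a) for every $h<x^\star$, all four settings agree at $h$, and every such $h$ at which the current player is indifferent is \emph{inert} --- all tied first moves lead to children of equal $o^1$-value --- so no discrepancy can arise below $x^\star$; and (b) at $x^\star$ the indifferent action set includes $s_1$ and $s_2$ with $o^1(x^\star-s_1)<o^1(x^\star-s_2)$, and these attain the minimum and maximum of $o^1$ over the indifferent children. Granting (a)--(b), the tie-breaks finish the argument: by (a), $o^1(x^\star)$ and the indifferent set are common to all settings, so the friendly starting player breaks the tie towards $x^\star-s_2$ and obtains $o^2_\fvf(x^\star)=o^1(x^\star-s_2)$, while the antagonistic one breaks it towards $x^\star-s_1$ and obtains $o^2_\ava(x^\star)=o^1(x^\star-s_1)$; hence $o^2_\fvf(x^\star)-o^2_\ava(x^\star)=o^1(x^\star-s_2)-o^1(x^\star-s_1)>0$ is a genuine discrepancy, and by (a) it is the first. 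In every instance computed, this gap equals $s_2-s_3$, so $\delta^1(x^\star)=0$ and $\delta^2(x^\star)=-(s_2-s_3)$ (e.g. $S=\{2,3,5\}$ at $x=14$, $S=\{3,5,8\}$ at $x=23$), with the friendly PSPE line from $x^\star$ consuming the whole heap in $2i+2$ moves --- $i+2$ of size $s_2$ and $i$ of size $s_1$, split evenly between the players.

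The main obstacle is item (a). Unlike the two-action setting of Section~\ref{sec:main}, there is no off-the-shelf First-Player-Advantage or tie-breaking-monotonicity statement --- the analogues of Lemmas~\ref{lem:start} and~\ref{lem:fvfdeviate} --- available when $|S|=3$, and inert indifferences genuinely occur below $x^\star$ (for instance $S=\{3,5,8\}$ is indifferent and inert at heaps $16,17,19,20,21,22$), so one must actually prove they remain inert all the way to $x^\star$. I expect this to require a Euclidean-style case analysis of the sacrifice structure, a three-action analogue of Lemma~\ref{lem:lastmove}, organized around the identity $s_1=s_2+s_3$: a single $s_1$-move spends exactly what an $s_3$-move followed by an $s_2$-move spends, so $\{a,b,a+b\}$ behaves like a two-action game carrying one extra parity degree of freedom, and the crux is that this freedom first becomes exploitable --- able to hand the opponent extra utility under friendly tie-breaking at no cost to the mover --- only after $i=\lfloor b/a\rfloor$ sacrifices have accumulated, i.e. exactly at $h=x^\star$. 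Once the normal form on $\{0,\ldots,x^\star\}$ is in hand, item (b) and the endgame arithmetic at $x^\star$ reduce to finite verifications.
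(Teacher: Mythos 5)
This statement is presented in the paper as a conjecture supported only by computation (Figure~\ref{fig:3-action_friendly_antagonistic} and the surrounding discussion; the authors defer the analysis to \cite{Tanmay}), so there is no proof of record to compare against and your attempt must stand on its own. Two of your three steps do stand. The $\gcd$-rescaling $S=d\cdot S'$ with $o^i_X(h)=d\,o^i_X(\lfloor h/d\rfloor)$ is a correct and complete reduction (the indifference sets of Definition~\ref{def:outcomes} transport under $h\mapsto\lfloor h/d\rfloor$ exactly as you say, and the first-discrepancy heap scales by $d$). Your treatment of the case $1\in S'$ is a genuine proof of the ``only if'' direction: with a unit move available the slack vanishes, $o^1_X(h)+o^2_X(h)=h$ collapses the recursion to $o^1_X(h)=h-\min_s o^1_{\X}(h-s)$, the indifferent actions are precisely the minimizers of $o^1_{\X}(h-s)$, on which that quantity is constant, so the tie-break is irrelevant and induction removes all dependence on $X$. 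That is more than the paper establishes for this conjecture.

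The gap is the ``if'' direction, which is where essentially all the content of the conjecture lives. Your items (a) and (b) --- that every indifference below $x^\star=ia+(2i+2)b$ is inert across all four settings, and that at $x^\star$ the indifferent set contains children with distinct $o^1$-values attained by $s_1$ and $s_2$ --- are a restatement of what must be proved, and you do not prove them: the passage beginning ``I expect this to require a Euclidean-style case analysis'' is a research plan, not an argument. The difficulty is real rather than bureaucratic: the tools the paper actually proves, Lemmas~\ref{lem:start} and~\ref{lem:fvfdeviate}, are established only for $|S|=2$ and are shown to fail for $|S|=3$ (Table~\ref{tab:deviation459}), so there is no off-the-shelf inductive invariant that simultaneously controls $o^1$ and the indifference structure up to $x^\star$; the heuristic that $s_1=s_2+s_3$ makes the game ``two-action with one extra parity degree of freedom'' is plausible but is exactly the step that would have to be made precise, and your numerical claims (gap equal to $s_2-s_3$, the $2i+2$-move friendly line) are observations, not consequences of anything you have shown. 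As it stands you have proved one direction of the equivalence and reduced the other to a well-posed but still open combinatorial claim.
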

%%%%%%%%%%%
\section{Antagonistic vs. zero-sum discrepancies}\label{sec:AvAvZs}
Let us consider zero-sum scoring play subtraction games. In this setting, Alice's cumulations add to a common score, and Bob's cumulations subtract from it. The outcome function \cite{CLMW2019} in this setting is:
$$o_{\textrm{zs}}(x) = 
\begin{cases}
    \max_{s \in S}\{s - o_{\textrm{zs}}(x - s)\}, & \text{if $x \ge$ min $S$} \\
    0, & \text{otherwise.}
\end{cases}$$
Recall that in antagonistic tie-breaking, both players minimize their opponent's utility whenever there is an indifference. This may appear similar to the zero-sum case where the players are always working against each other.
%In both zero-sum scoring play and antagonistic self-interest play, the game is built on a shared heap, so at a structural level, one player’s gain reduces the other’s potential. The difference is that in a zero-sum game, this opposition is explicit; Alice’s utility is defined as Bob’s loss. In $\AvA$, by contrast, opposition arises only at tie-breaking: outside of indifference, each player maximizes their own tally without regard to the other. 
In this spirit we searched for games where the $\AvA$ and scoring play settings diverge, but in the case of two-action games, we did not find any discrepancies. 

%Consider the $\AvA$ self-interest game and the zero-sum scoring play game for $S = \{s_2, s_1\}$, where $s_2 < s_1$. However, the graph produced  (using the same methodology as 
%Figure~\ref{fig:2-action_friendly_antagonistic}) 
%was completely empty, which we posit as a larger trend.

\begin{conjecture} 
Consider any $S=\{s_2,s_1\}$. Then, for all heap sizes $h$, $o_{\textrm{zs}}(h) = o_{\ava}^1(h) - o_{\ava}^2(h)$. 
\end{conjecture}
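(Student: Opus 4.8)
The plan is to argue by strong induction on the heap size $h$, the base case $h<s_2$ being trivial since both sides are $0$. The engine of the argument is a reconciliation of the two recursions. Substituting the inductive hypothesis $o_{\textrm{zs}}(h-s)=o^1_\ava(h-s)-o^2_\ava(h-s)$ into the zero-sum recursion shows that a zero-sum optimal first move from $h$ maximizes $v(s)-o^1_\ava(h-s)$, where $v(s):=s+o^2_\ava(h-s)$ is exactly the $\AvA$ mover's total after playing $s$. Now the $\AvA$ mover maximizes $v(s)$ and then, among the maximizers $S^*(h)$, breaks ties by minimizing $o^1_\ava(h-s)$. So it is enough to prove that \emph{some zero-sum optimal move lies in $S^*(h)$}: on $S^*(h)$ the function $v$ is constant, hence there the zero-sum objective $v(s)-o^1_\ava(h-s)$ is maximized precisely by the antagonistic tie-break $s^*$; a short computation then shows $s^*$ is globally zero-sum optimal and $o_{\textrm{zs}}(h)=v(s^*)-o^1_\ava(h-s^*)=o^1_\ava(h)-o^2_\ava(h)$, closing the induction.

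So everything reduces to showing that the $\AvA$ first move from $h$ is zero-sum optimal, and since $|S|=2$ I would do this by a case split, writing $H_1=h-s_1$, $H_2=h-s_2$, $\Delta=s_1-s_2$. If $h<s_1$ the move is forced and there is nothing to check. If $S^*(h)=\{s_1,s_2\}$ (the $\AvA$ mover is indifferent), then $v$ is constant on all of $S(h)$, so the reconciliation above applies with no further input. If the $\AvA$ mover strictly prefers the greedy move $s_1$, i.e.\ $v(s_1)>v(s_2)$, then from $v(s_1)-v(s_2)>0\ge o^1_\ava(H_1)-o^1_\ava(H_2)$ one gets that $s_1$ is zero-sum optimal; the second inequality here is monotonicity of $h\mapsto o^1_\ava(h)$ (since $H_1<H_2$), which I would establish as a separate short lemma or fold into the main induction.

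The remaining case, $S^*(h)=\{s_2\}$ with $h\ge s_1$ (the $\AvA$ mover strictly prefers to sacrifice), is the substantive one. By Lemma~\ref{lem:sacr} this can only occur in the non-dominant regime, and by Lemma~\ref{lem:lastmove} a \emph{strict} gain from a sacrifice cannot come from scenario (i) (which leaves greedy play equally good), so it must come from scenario (ii): on the $H_2$ branch the mover obtains one extra terminal move, necessarily of value $s_2$, that is unavailable on $H_1$, while the opponent's line is essentially untouched. Translating this combinatorial description into the numerical identities $o^1_\ava(H_2)=o^1_\ava(H_1)$ and $o^2_\ava(H_2)=o^2_\ava(H_1)+s_2$ yields $o_{\textrm{zs}}(H_1)-o_{\textrm{zs}}(H_2)=s_2>\Delta$, which is exactly optimality of $s_2$ for the zero-sum player, and the reconciliation finishes the step.

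I expect this last translation to be the main obstacle: Lemma~\ref{lem:lastmove} is phrased in terms of move sequences and slack, and one must argue rigorously that in PSPE the opponent's total is left unchanged by the extra terminal $s_2$ — so that the $\Delta$-deficit from the short first move is converted into exactly an $s_2$-surplus in the mover's tally — rather than merely bounding the quantities. Secondary points are the monotonicity of $o^1_\ava$ used in the greedy case, the degenerate sub-case of the indifference case where $o^1_\ava(H_1)=o^1_\ava(H_2)$ (either tie-break choice works, so it is harmless), and the observation that the whole argument is insensitive to which player moves first, which is automatic since the $\AvA$ setting is symmetric. Should the clean ``$v$ constant on $S^*(h)$'' reduction not quite suffice in some boundary configuration, the fallback is to strengthen the inductive hypothesis to assert that the whole pair $o_\ava(h)$ equals the pair of PSPE gains along some zero-sum optimal line (not merely the difference), thereby also tracking the total number of pebbles removed and making the bookkeeping in the sacrifice case fully rigorous.
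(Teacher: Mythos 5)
First, note that the paper does not actually prove this statement: it appears only as a conjecture, with the $|S|=2$ proof deferred to the follow-up paper \cite{Tanmay}, so there is no in-paper argument to compare yours against. Your reduction is, in my view, the right skeleton and is correct as far as it goes: substituting the inductive hypothesis turns the zero-sum recursion into $\max_s\{v(s)-o^1_\ava(h-s)\}$ with $v(s)=s+o^2_\ava(h-s)$, and since the antagonistic tie-break minimizes $o^1_\ava(h-s)$ over $\arg\max v$, the whole conjecture reduces to showing that this unrestricted maximum is attained at the AvA move. The forced-move and full-indifference cases are indeed immediate.

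However, two steps are genuinely open as written. The lesser one is monotonicity of $h\mapsto o^1_\ava(h)$, which your greedy case relies on; in a non-zero-sum PSPE this is not automatic (a player on a larger heap cannot simply simulate play on a smaller one, because the opponent's equilibrium responses differ), so it needs its own inductive proof rather than being ``folded in'' casually. The substantive gap is the strict-sacrifice case, where you need
\[
o^2_\ava(H_2)-o^2_\ava(H_1)-\Delta \;\ge\; o^1_\ava(H_2)-o^1_\ava(H_1),
\]
i.e., the mover's strict profit $A>0$ from sacrificing must dominate the opponent's profit $B\ge 0$ from facing the larger residual heap. You extract this from Lemma~\ref{lem:lastmove} by asserting the exact identities $o^1_\ava(H_2)=o^1_\ava(H_1)$ and $o^2_\ava(H_2)=o^2_\ava(H_1)+s_2$, but that lemma (whose own proof is an informal coupling of move sequences) only asserts that one of two scenarios ``applies''; it does not establish that the opponent's PSPE utility is identical on the two branches, nor that the mover's gain is exactly one extra $s_2$ --- the two branches are independent PSPE computations whose lines of play need not be couplable move for move. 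The conservation identity $o^1_\ava(H_i)+o^2_\ava(H_i)+s_i+\mathrm{slack}_i=h$ yields only $A+B=\mathrm{slack}_1-\mathrm{slack}_2<s_2$, which together with $A\ge 1$ and $B\ge 0$ does not force $A\ge B$. So this case needs a genuinely new argument; your fallback of strengthening the induction to carry the full pair $o_\ava(h)$ along a zero-sum optimal line is probably the right fix, but as it stands the proposal is a plan whose hardest step remains open, as you yourself acknowledge.
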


In the case of three-action games discrepancies do emerge; see Figure~\ref{fig:3-action_antagonistic_zero-sum}. These discrepancies appear bounded by planes. 

\begin{figure}[h!] 
    \centering
    \includegraphics[width=0.5\linewidth]{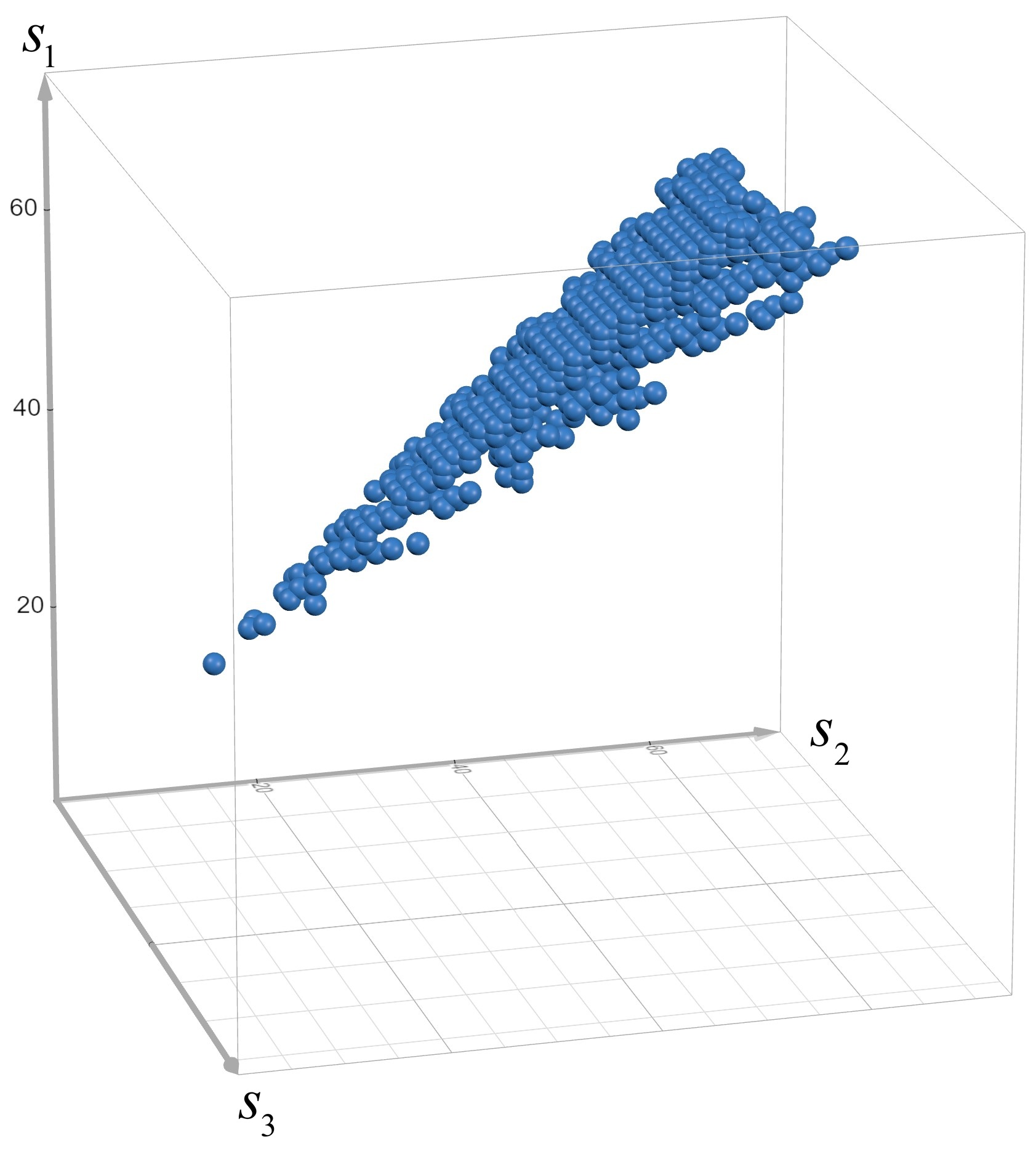}\caption{Points $(s_3, s_2, s_1)$ where there is a discrepancy between $\AvA$ and zero-sum games for $S = \{s_3, s_2, s_1\}$, with $s_3 < s_2 < s_1$. An online rotatable version can be found \href{https://www.desmos.com/3d/xbazasvgxs}{here}.}
    \label{fig:3-action_antagonistic_zero-sum}
\end{figure}
\begin{conjecture}Consider $S = \{s_3, s_2, s_1\}$ where $s_3 < s_2 < s_1$. Suppose that $s_2 \le 2s_3$ or $s_1 \ge s_2 + s_3$. Then there is no discrepancy between zero-sum scoring and antagonistic self-interest play.
\end{conjecture}

%We study these conjectures in a future paper \cite{Tanmay}.

\section{Some more open problems}\label{sec:future}
We have tested 200 random subtraction sets for each size $3\le |S|\le 10$, with $\max S \le 25$. For $h\le 300$, we have detected no positive outcome discrepancy, for either player, going from the setting FvF to AvA. This is in support of Conjecture~\ref{con:main}.

\begin{conjecture}
    For any $S$ and any two tie-breaking conventions, the discrepancies are bounded.
\end{conjecture}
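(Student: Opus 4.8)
The plan is to reduce the conjecture to the boundedness of a single normalized quantity, and then to establish that boundedness through eventual periodicity of the outcome sequences. First, for every convention $X$ and every heap size $h$, the number of pebbles removed during PSPE play from $h$ equals $h$ minus the final slack, so $0\le h-(o^1_X(h)+o^2_X(h))<\min S$; in particular $|(o^1_X(h)+o^2_X(h))-(o^1_Y(h)+o^2_Y(h))|<\min S$ for any two conventions $X,Y$. Writing $\ell_X(h):=o^1_X(h)-o^2_X(h)$ for the first-player lead, we obtain $|o^i_X(h)-o^i_Y(h)|\le \tfrac12|\ell_X(h)-\ell_Y(h)|+\tfrac12\min S$ for $i\in\{1,2\}$, so it suffices to bound $|\ell_X(h)-\ell_Y(h)|$ uniformly in $h$; \emph{a fortiori} it is enough to bound each $\ell_X$ individually.

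Next we would run a finite-state argument. Fix the two conventions of interest together with their role-swapped duals, obtaining a finite set $\mathcal{C}$ of conventions closed under $X\mapsto\X$, and for each $h$ record the window of first differences
\[
\sigma(h)=\Big(\,o^i_X(h-k)-o^i_X(h-k-1)\,\Big)_{X\in\mathcal{C},\ i\in\{1,2\},\ 0\le k<\max S}.
\]
Each coordinate is an integer in $\{0,1,\dots,\max S\}$, so $\sigma$ takes only finitely many values; meanwhile Definition~\ref{def:outcomes}, together with the Markovian property of the tie-breaking conventions, expresses $\sigma(h+1)$ as a fixed function of $\sigma(h)$, because computing $o^1_X(h+1)=\max_{s}\{s+o^2_{\X}(h+1-s)\}$, the indifferent set $S^*$, and the selected move only requires the \emph{relative} values of the $o^i_X$ inside the window. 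Hence $h\mapsto\sigma(h)$ is an eventually periodic orbit of a deterministic map on a finite set: there are $h_0$ and $p$ with $\sigma(h+p)=\sigma(h)$ for all $h\ge h_0$, so $o^i_X(h+p)=o^i_X(h)+c^i_X$ for large $h$, with integer constants $c^i_X$. Therefore $\ell_X(h+p)=\ell_X(h)+(c^1_X-c^2_X)$, which shows $\ell_X$ — and every $\ell_X-\ell_Y$ — is bounded precisely when $c^1_X=c^2_X$. Since the slack is bounded by $\min S$ while its increments are functions of $\sigma(h)$, it is eventually $p$-periodic, and summing over one period gives $c^1_X+c^2_X=p$; thus the whole conjecture reduces to the single equality $c^1_X=c^2_X$, i.e.\ that over one period each player removes the same number of pebbles in PSPE.

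We expect $c^1_X=c^2_X$ to be the genuine difficulty: it asserts that no deterministic tie-breaking convention lets the first player build a \emph{linearly} growing advantage, which is essentially the content of the conjecture. The natural attack is a shift-and-mirror argument in the periodic regime — a shift of the heap by $p$ should let the trailing player replicate, one period later, the moves that produced the surplus for the leader, forcing $c^1_X\le c^2_X$, with equality by the symmetric argument. An independent route is to compare $\ell_X$ with the zero-sum value $o_{\textrm{zs}}$: unfolding Definition~\ref{def:outcomes}, maximizing $o^1_X(h)=\max_{s}\{s+o^2_{\X}(h-s)\}$ coincides, up to the bounded slack correction, with the zero-sum recursion $o_{\textrm{zs}}(h)=\max_{s}\{s-o_{\textrm{zs}}(h-s)\}$, so one anticipates $|\ell_X(h)-o_{\textrm{zs}}(h)|$ to be bounded; since the zero-sum scoring subtraction sequence $o_{\textrm{zs}}$ is eventually periodic, hence bounded \cite{CLMW2019}, this would also finish the proof. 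The obstacle for this second route is that a naive induction loses an additive $\min S$ per level of the recursion because of the slack, so one must show that these slack errors do not accumulate coherently — for instance by an amortized potential argument, or again by first passing to the eventually periodic regime in which the slack is constant.

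As a consistency check with the rest of the paper we would first settle $|S|=2$: the dominant regime already exhibits zero discrepancy by Proposition~\ref{prop:equality}, and in the non-dominant regime the explicit sacrifice dichotomy of Lemma~\ref{lem:lastmove}, together with First Player Advantage (Lemma~\ref{lem:start}), should pin $\ell_X$ down to within a constant depending only on $s_1$. Beyond reconfirming the main theorem, this should suggest the right invariant to feed into the general finite-state argument, and it would also bear on the two-action zero-sum comparison conjecture of Section~\ref{sec:AvAvZs}.
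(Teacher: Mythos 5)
This statement is left as an open conjecture in the paper (Section~\ref{sec:future}); there is no proof of it anywhere in the text, so the question is whether your proposal actually closes it. It does not: it is a reduction scheme with two genuine gaps. First, your finite-state argument needs the window $\sigma(h)$ of first differences to range over a finite set, and you justify this by asserting that each coordinate $o^i_X(h-k)-o^i_X(h-k-1)$ lies in $\{0,1,\dots,\max S\}$. That is false as stated — already for $S=\{3,5\}$, Table~\ref{tab:3_5} gives $o^2_{\fvf}(15)-o^2_{\fvf}(14)=5-6=-1$ — and, more importantly, no uniform bound on these individual first differences is established anywhere. The identity $o^1_X(h)+o^2_X(h)=h-\mathrm{slack}_X(h)$ only controls the \emph{sum} of the two players' increments, not each separately, so a priori $o^1_X$ could jump up by a large amount while $o^2_X$ drops correspondingly. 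Boundedness of the individual first differences is essentially the paper's separate open problem on eventual periodicity of these sequences, so at this step you are assuming something of comparable difficulty to what you are trying to prove. (Granting finiteness, the rest of that paragraph is sound and useful: the transition $\sigma(h)\mapsto\sigma(h+1)$ is indeed well defined because the recursion of Definition~\ref{def:outcomes} and the indifferent set $S^*$ depend only on relative values inside a window of width $\max S$, and the conclusion $o^i_X(h+p)=o^i_X(h)+c^i_X$ with $c^1_X+c^2_X=p$ follows.)

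Second, even modulo the finite-state step, you explicitly leave the decisive claim $c^1_X=c^2_X$ unproved, offering two heuristic attacks (shift-and-mirror in the periodic regime; comparison of $\ell_X$ with $o_{\textrm{zs}}$) while yourself identifying why each is incomplete: the trailing player cannot in general replicate the leader's moves one period later because who moves from a given heap size is not controlled, and the slack errors in the zero-sum comparison could a priori accumulate across levels of the recursion. Since this equality is, as you acknowledge, essentially the content of the conjecture, what you have is a reasonable research plan — the normalization by the lead $\ell_X$, the reduction to a per-period balance condition, and the consistency checks against Lemma~\ref{lem:start}, Proposition~\ref{prop:equality}, and the Section~\ref{sec:AvAvZs} conjectures are all sensible — but it is not a proof, and its two load-bearing steps are precisely the open parts.
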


\begin{problem}
 How large a discrepancy $\FvF\rightarrow \AvA$ (in absolute value) is possible as a function of $\max S$? 
\end{problem}

It is known that zero-sum cumulative subtraction games are eventually periodic, with a period of $2\max S$ (independent of the size of $S$). 
\begin{problem}
 Is it true that the discrepancies of self-interest subtraction are eventually periodic, with the period length $2\max S$? 
\end{problem}
\begin{problem}
Provide a bound on the pre-period length as a function of $\max S$.
\end{problem}

\begin{problem}
    What are the subtraction games with 0-discrepancy going from FvF to AvA? 
\end{problem}

Recall that by our notation, in the first case, the starting player deviates, while in the second case, the player who does not start deviates. So we have an asymmetric situation with respect to tie-breaking rules.
\begin{problem}
What are the subtraction games (and heap sizes) with positive discrepancy, going from FvF to AvF or FvA, respectively? 
\end{problem}
Recall what happened at heap size $61$ for the ruleset $S=\{4,5,9\}$ in FvF, a player who unilaterally deviates to antagonistic strictly gains. Experimental results point towards that this cannot happen if deviating towards AvA.
\begin{problem}
Is it true that no player can have a positive discrepancy by going from AvF or FvA to AvA?
\end{problem}
%We have looked into max/min (greedy/sacrifice) as alternative tie-breaking convetions for indifferent actions. But player utilities, for general subtraction sets, appear mixed up, with no clear preference order between pairs of tie-breaking rules, across all positions.
%\begin{problem}\label{prob:7}
    %Is there any pair of deterministic tie-breaking rules that bound all other ones, such that PSPE utility is maximized for both players across all positions?
%\end{problem}

%%%%%%%%%
\section{Discussion and related work}\label{sec:related}

 In classical combinatorial games, such as the normal-play convention, any indifference between options implies indifference for both players; perfect information induces unique optimal outcomes. By contrast, our self-interest model can produce local indifferences with discrepancies in the result for the other player. And thus, the choice of tie-breaking mechanism becomes essential in any pure subgame-perfect equilibrium (PSPE) computation. Our interest in staying close to combinatorial game theory lies in studying the deterministic backwards-induction algorithms induced by specific choices of deterministic tie-breaking mechanisms. 

Our approach has been to assign each player a deterministic tie-breaking identity, \emph{friendly} or \emph{antagonistic}, applied uniformly throughout the game tree. One may likewise impose a deterministic max/min (greedy/sacrifice) choice. For example, if $S=\{3,5\}$, then a greedy deterministic tie-breaking rule would be to always choose $5$ in case of indifference. This would allow backward induction to proceed unambiguously. Such a tie-breaking rule may seem simpler; however, the question of whether there is indifference between the options still requires a complete backward induction computation. The choice between friendly versus antagonistic (or greedy vs. sacrificial) tie-breaking depends on the entire continuation and hence requires full recursive evaluation. In this sense, either tie-breaking rule is both a behavioral commitment and a computational burden. In line with the tradition of combinatorial game theory, we assume perfect players capable of such reasoning. In a purely recreational setting, any self-interest subtraction game should, of course, be played without any such commitment.

%Either of these designs contrast with many settings in economics, AI, and multi-agent systems, where tie-breaking rules are typically exogenous, randomized, or fixed to promote fairness or efficiency. 
This perspective contrasts with many settings in economics, AI, and multi-agent systems, where tie-breaking rules are imposed exogenously, sometimes randomized to ensure fairness, or fixed for predictability or efficiency.

For instance, in a standard auction or matching model, if two agents submit identical bids or preferences, the model typically resolves the tie by randomization or by a fixed priority order specified in advance; the agents themselves do not control this rule, and it is not typically intended to affect the strategic structure beyond selecting one outcome among otherwise equivalent ones. Similarly, in many planning or learning algorithms, when multiple actions have equal value estimates, a fixed lexicographic rule is used to select one action, ensuring the algorithm is well-defined, rather than modelling any strategic preference of the agent. A related illustration arises in school choice: when applicants have the same score for a seat, a deterministic priority list systematically favors the same students, whereas random tie-breaking treats equally qualified applicants symmetrically.

In our setting, the tie-breaking rule is part of the player’s identity; it is not merely a local resolver of ambiguity, but prompts questions about comparative efficiency in a global sense, as different deterministic conventions can be compared across positions and instances. A natural question is whether there exists a pair of deterministic tie-breaking rules that jointly outperform any other pair for both players. We have tested both max/min and friendly/antagonistic, and we have not found any such pair. 

Across several domains, tie-breaking is often treated as a technicality, yet small local choices can propagate to global effects. In each of the following examples, a locally innocuous rule for resolving indifference alters the structure or outcomes of the model as a whole. In voting, different schemes (e.g., parallel-universes tiebreaking, PUT) alter strategies and collective outcomes~\cite{freeman2015}. In fair division, when multiple near-fair allocations exist, deterministic selection among them can shift welfare and perceived fairness~\cite{procaccia2017}. In tournaments, cycles can make winner selection underdetermined, and deterministic refinements such as minimal stable sets select different winner sets on different instances, so the choice of convention is not neutral~\cite{brandt2008}.  That said, the impact of tie-breaking is model-dependent: in well-behaved private-value auctions, equilibria can be invariant to the rule~\cite{jackson2005}. Overall, our results fit a broader pattern in which deterministic conventions for resolving indifference have system-level consequences, while also highlighting boundaries where such rules have no effect.

Our self-interest subtraction games exhibit a structural phenomenon. Deterministic tie-breaking is necessary to compute PSPE (well-posed recursion), but it is not always neutral. Empirically (see Appendix), we find subtraction sets for which some heaps make the supposedly indifferent player strictly better or worse under one tie-breaking rule than under another; no single rule dominates the others. As with elections and cascades, repeated local indifference can induce global divergences whenever $|S|>2$.

 In computational social choice, a voting rule must specify what happens when two (or more) candidates are exactly tied. Deterministic conventions such as lexicographic priority versus random-candidate tie-breaking are often treated as ``mere details''. However, recent work shows that the choice of tie-breaker has global consequences: it can change equilibria, manipulation incentives, and welfare. In particular, \cite{BaileyTovey2024TieBreakingManipulability} analyses standard rules and proves that neither deterministic convention dominates across instances: each can make elections more or less manipulable and can change price-of-anarchy bounds. Thus, global outcome can be sensitive to how we resolve local indifferences.

 In sequential social-learning models (wherein agents act one-by-one, each observing predecessors’ actions and a private signal), an \emph{informational cascade} arises when agents rationally ignore their signals and follow the crowd; learning may then fail even as more signals arrive \cite{BikhchandaniHirshleiferWelch1992}. A knife-edge occurs when public information and a private signal exactly balance, and the agent is indifferent; \cite{KoesslerZiegelmeyer2000TieBreakingCascades} shows that the existence and properties of cascades can hinge on this local tie-breaking convention. Relaxing the usual modelling convention that an indifferent agent selects a fixed action can admit equilibria without informational cascades. Again, a local choice can have global repercussions, namely, whether society asymptotically learns the true state. For example, in a sequential learning model, a cascade can arise at knife-edge histories because an indifferent agent is forced to select a predetermined action; allowing either choice at indifference can prevent the cascade and admit equilibria in which learning continues. 

Let us mention an area where the choice of a tie-breaking convention has surprising theoretical consequences. 
In Richman bidding combinatorial games players bid to gain the right to move (in an underlying combinatorial game), paying their bid to the opponent, and ties are resolved via a \emph{marker} that grants one player priority in case of equal bids. In the classical continuum setting, the theory is not sensitive to any particular tie-breaking convention~\cite{lazarus1996richman}, since equal bids occur with probability zero. In novel discrete settings, however, the marker becomes a strategic asset, influencing equilibrium behavior; see the survey~\cite{kant2025s}. Moreover, tie-breaking is central in \cite{kant2024, kant2025}, where games generalize alternating normal play (last-move-wins): players may include the marker in their bid to counter an impending zugzwang. Switching to random or one-sided tie-breaking would lead to a different (and probably less rich) theory. For example, even a modest change of ``not being allowed to include the marker in the bid'', induces mix strategies in equilibrium, which in itself appears to remove the standard CGT toolbox. 

Let us mention a related work on ``extensive-form perfect-information games'', where concept such as {\em credible threat} and {\em credible promise} are close to our antagonistic and firendly settings, although in our study we have not (yet) emphazised any ``threat'' component. 
%\paragraph{Extensive-form perfect-information games:}
Tran{\ae}s~\cite{tranaes} studies tie-breaking under indifference as a way to influence the opponent’s choices in finite extensive-form games of perfect information. A \emph{credible threat} selects, among tied continuations, a move that leaves the mover’s payoff unchanged while reducing the opponent’s continuation payoff, and thereby inducing earlier choices favorable to the mover. Symmetrically, a \emph{credible promise} selects, under indifference, a move that increases the opponent’s continuation payoff, again steering earlier choices. These conventions are incorporated into the equilibrium concept of \emph{path-perfect} subgame-perfect equilibrium: an equilibrium for which, along the path and in every subgame, there is no scope for indifference-based threats or promises to influence earlier choices profitably. Tran{\ae}s proves that every finite extensive-form game of perfect information admits a path-perfect pure subgame-perfect equilibrium.

\section*{Acknowledgements}
We would like to thank Eastside Preparatory School in Kirkland, WA, and their Independent Curriculum/Senior Thesis program for supporting this research and related work on subtraction games. Section~\ref{sec:related} has been composed by using also ChatGPT; we reach out to researchers in related areas to encourage overarching tie-breaking studies in future projects.

\bibliographystyle{abbrvnat}   % or abbrvnat, unsrtnat, etc.
\bibliography{ref_selfint}            % refs.bib

\clearpage
\section*{Appendix: Unilateral deviation for larger subtraction sets}\label{sec:unidev}
% Let us study some tables with the initial outcomes for all tie-breaking combinations of the rulesets 
% \begin{itemize}
%     \item $S_1=\{3,8,11,13\}$ (Table~\ref{tab:3_8_11_13}) and
%     \item  $S_2=\{4,5,9\}$ (Table~\ref{tab:deviation459}). 
% \end{itemize}
% %The outcome and discrepancy tables are provided in Tables~\ref{tab:3_8_11_13} and \ref{tab:deviation459}. 
% In particular, we focus on discrepancies that do not appear in the two-action case, and explain how they arise through backward induction. Aiding the reader we include the PSPE-moves in the same columns as the outcomes.'
%\texttt{
Our fish-pile-wise Penguins have learned that acting friendly in cases of indifference, through a sacrifice, can benefit the other player. This was evident in Table~\ref{tab:3_5}, where the available slack was used precisely for this purpose. Due to the recursive definition of outcomes, other curious effects await our friends, the Penguins. \vspace{2 mm}

\begingroup
\sffamily
\begin{quote} %

Suppose Alice and Bob are faced with a heap of size 61, from which they may grab 4, 5, or 9 fishes. Bob has promised his Penguin clan to return with a total of 28 fishes. Alice, as the starting player, has pledged to remain friendly throughout. The question arises: should Bob maintain his friendly disposition, or should he deviate and act antagonistically? 
In this particular instance, it turns out that deviating benefits him. By becoming antagonistic, Bob secures one fish for immediate consumption, warming his beak, while still delivering the promised 28 fishes to his clan. If he remains friendly, however, he receives nothing for himself. Bob is puzzled, for his computational skills do not stretch beyond one heap of fish at a time: his tie-breaking rule is designed to leave him indifferent in such cases, yet here he is clearly rewarded for turning against Alice, who correspondingly loses a fish. %Bob is puzzled, while his computational power does not stretch beyond one heap of fish at the time. His tie-breaking rule is designed to leave him indifferent in such cases, yet here he is clearly rewarded for turning against Alice, who correspondingly loses a fish.
The next day, they return to the fish pile, but the rules have changed. On their turns, they must now take 3, 8, 11, or 13 fishes, starting from a heap of size 36.  Remembering the previous day’s outcome, and despite being fish-pile-wise, Bob is tempted by hunger. He once again turns antagonistic. 
However, this time the result is unfavourable. Bob’s clan only requires 14 fishes, knowing that Alice will start, but his tie-breaking choice leads to a total of only 13 fishes, which is one short of the requirement. His reputation within the clan is tarnished, and his beak remains unsatisfied.
%}
\end{quote}
\par
\endgroup
\vspace{2 mm}

%Our all-knowing Penguins have learned that acting friendly in case of indifference, via a sacrifice, can benefit the other player; we saw this in Table~\ref{tab:3_5}, where the slack has been used to this purpose. Because of the recursive definition of outcomes, other curious effects await our friends, the Penguins. Suppose Alice and Bob faces a heap of size 61, where they may grab, 4,5 or 9 fishes. Bob has promised his Penguin clan to return with 28 fishes. Alice, as a starting player, has promised to remain friendly. Now, should he deviate and become an antagonistic Penguin? It turns out that, in this setting he will benefit a fish to consume immediately and warm hos beak, while the clan will receive their fair share. If he remeins friendly then he will get nothing for himself. Bob is puzzled though, becuase while his tie-breaking behavior is defined to leave him indifferent, in this particular case he was awarded a fish, by turning against Alice (who does lose a fish in this case). Next they, they return to the fish pile, but rules have changed, at their turn, they must take 3,8,11 or 13 fish, starting from a heap of size 36. Bob's clan only require 14 fish (knowing that Alice starts). Bob recalls his yesterday experiences, and allthough being all-knowing makes a misjudgment, his hungry beak indulges him in once again turning antagonistic. But behold this leads to only 13 fish, and his reputation with the clan is tarnished, while his beak remains unsatisfied. 

In order to understand what happened, let us study the initial outcomes for all tie-breaking combinations of the following rulesets:
\begin{itemize}

\item $S_1=\{3,8,11,13\}$ (see Table~\ref{tab:3_8_11_13}), and
\item $S_2=\{4,5,9\}$ (see Table~\ref{tab:deviation459}).
\end{itemize}
We will reveal discrepancies that do not occur in the two-action case and explain how they arise through backward induction. For the reader’s convenience, the tables list the PSPE moves in the same columns as the outcomes.

%In particular, we will focus on describing some type of discrepancies that do not appear in the two-action case, and how they arise from the backward induction computation. 

\subsection*{The ruleset $S_1=\{3,8,11,13\}$}
Recall that in the two-action setting in FvF, a player who deviates unilaterally cannot harm their own utility (unless the opponent also deviates and becomes antagonistic). Consider the ruleset $S_1$, with outcome discrepancies and optimal moves for all heap sizes smaller than $50$ listed in Table~\ref{tab:3_8_11_13}. For all heap sizes below $28$, PSPE utilities are independent of tie-breaking conventions. At heap size $28$, however, if Alice deviates from FvF to AvF, then Bob loses one unit: in AvF, Alice chooses $3$ instead of her FvF choice $11$.

This leads to a curious effect at a heap size of $36$, namely that Bob loses a unit by becoming antagonistic, that is, by playing in the FvA setting instead of remaining friendly in FvF. Let us justify this phenomenon, since it shows how deterministic tie-breaking becomes much more intricate in multi-action games than in the simpler two-action case. In the FvF setting, with $h=36$, Alice plays $8$, producing the outcome $(22,14)$. But in the FvA setting, while remaining friendly, she does not choose $8$, since that yields her utility $8+13=21$, whereas playing $11$ yields $11+11=22$. Conversely, in the FvF setting, Alice does not choose $11$, which would lead to Bob's utility $13$ (the first player's PSPE utility at a heap of size $25$), because by instead reducing the heap by $8$, she ensures that Bob receives $14$ units.

\subsection*{The ruleset $S_2=\{4,5,9\}$}
The ruleset $S_2$ contains a diametral property of $S_1$, namely, here a player can strictly benefit by unilateral deviation to an antagonistic. As Lemma~\ref{lem:fvfdeviate} shows, this is impossible whenever $|S|=2$. For $S_2$, however, such a case occurs at heap size $h=61$. 
By Alice removing $4$ in the FvA setting, instead of $5$ or $9$ (the PSPE moves in the FvF setting), Bob both adds one unit to his own PSPE utility, and removes one unit from Alice's. 
%When Alice removes $4$ in the FvA setting, instead of $5$ or $9$ (the PSPE moves in FvF), then Bob gains one unit in his PSPE utility while that simultaneously reduces Alice’s utility by one. 
The PSPE utilities of all options at $h=61$ are: 
%Let us list the PSPE utilities of all options of $h=61$ in both settings: 
$$o_{\fvf}(h-4)=(29,28), o_{\fvf}(h-5)=(28,28), o_{\fvf}(h-9)=(28,24) $$ and $$o_{\avf}(h-4)=(29,28), o_{\avf}(h-5)=(28,27), o_{\avf}(h-9)=(28,23).$$  

In the FvA setting, Alice observes that all three moves give her the same utility of $32$. Acting friendly, she therefore chooses the option that maximizes Bob’s outcome, namely removing $4$, which gives him $29$ rather than $28$. By contrast, in FvF the situation is entirely different: here she can obtain $33$ by playing either $5$ or $9$, while playing $4$ yields only $32$. In FvF, there is no indifference that affects Bob's PSPE utility, and there is no reason for her to sacrifice by playing $4$. This explains the interesting discrepancies that arise in the FvA setting at $h=61$ (and many more at larger heaps). It is also interesting that whenever the deviating player gets a higher utility, the utility of the other player reduces by the same amount. This is unlike the case when the deviating player gets a lower utility, while the utility of the other player remains the same.
Purely self-interested players, as we define in this paper, aim solely to maximize their own utility, while disregarding that of their opponent. This example illustrates that a self-interested player may be strictly motivated to deviate and become antagonistic; this is a non-trivial observation, while the defined outcomes do not reveal any such incentive. With only two actions, no such intrinsic motivation exists, as established earlier in this paper. %: unilateral tie-breaking deviations from friendly to antagonistic are monotonely non-increasing the oth. 

In Tables~\ref{tab:3_8_11_13} and \ref{tab:deviation459} the $\delta$ symbols represent the deterministic tie-breaking deviations from FvF for the respective settings and players, where $\alpha$ is Alice and $\beta$ is Bob.

%In a pure self-interest convention, players are always maximizing their own utility, but disregarding their opponent's ditto. So, this example  illustrates that self-interested players can be strictly motivated to deviate. Whenever there are only two actions, there is no intrinsic motivation to deviate, which this paper justifies. 
%Let us explain the behavior of $S_2$ at the heap of size 61 by looking into the second page of the table in Appendix~2. Again, as in the case of $S_1$, it is the second player, Bob, whose discrepancy changes. 

%Consider first the heap of size 28. , which . 

%\clearpage
\begin{table}\caption{The initial outcomes, together with PSPE moves for the ruleset $S = \{3, 8, 11, 13\}$. }\label{tab:3_8_11_13}
\begin{center}
{\small  
\begin{tabular}{|r|l|l|r|r|l|r|r|l|r|r|}
\hline
Heap & FvF  & FvA  & $\delta_\alpha$ & $\delta_\beta$ & AvF & $\delta_\alpha$ & $\delta_\beta$ & AvA & $\delta_\alpha$ & $\delta_\beta$ \\
\hline
0 & (0,0) & (0,0) & 0 & 0 & (0,0) & 0 & 0 & (0,0) & 0 & 0 \\
1 & (0,0) & (0,0) & 0 & 0 & (0,0) & 0 & 0 & (0,0) & 0 & 0 \\
2 & (0,0) & (0,0) & 0 & 0 & (0,0) & 0 & 0 & (0,0) & 0 & 0 \\
3 & (3,0), 3 & (3,0), 3 & 0 & 0 & (3,0), 3 & 0 & 0 & (3,0), 3 & 0 & 0 \\
4 & (3,0), 3 & (3,0), 3 & 0 & 0 & (3,0), 3 & 0 & 0 & (3,0), 3 & 0 & 0 \\
5 & (3,0), 3 & (3,0), 3 & 0 & 0 & (3,0), 3 & 0 & 0 & (3,0), 3 & 0 & 0 \\
6 & (3,3), 3 & (3,3), 3 & 0 & 0 & (3,3), 3 & 0 & 0 & (3,3), 3 & 0 & 0 \\
7 & (3,3), 3 & (3,3), 3 & 0 & 0 & (3,3), 3 & 0 & 0 & (3,3), 3 & 0 & 0 \\
8 & (8,0), 8 & (8,0), 8 & 0 & 0 & (8,0), 8 & 0 & 0 & (8,0), 8 & 0 & 0 \\
9 & (8,0), 8 & (8,0), 8 & 0 & 0 & (8,0), 8 & 0 & 0 & (8,0), 8 & 0 & 0 \\
10 & (8,0), 8 & (8,0), 8 & 0 & 0 & (8,0), 8 & 0 & 0 & (8,0), 8 & 0 & 0 \\
11 & (11,0), 11 & (11,0), 11 & 0 & 0 & (11,0), 11 & 0 & 0 & (11,0), 11 & 0 & 0 \\
12 & (11,0), 11 & (11,0), 11 & 0 & 0 & (11,0), 11 & 0 & 0 & (11,0), 11 & 0 & 0 \\
13 & (13,0), 13 & (13,0), 13 & 0 & 0 & (13,0), 13 & 0 & 0 & (13,0), 13 & 0 & 0 \\
14 & (13,0), 13 & (13,0), 13 & 0 & 0 & (13,0), 13 & 0 & 0 & (13,0), 13 & 0 & 0 \\
15 & (13,0), 13 & (13,0), 13 & 0 & 0 & (13,0), 13 & 0 & 0 & (13,0), 13 & 0 & 0 \\
16 & (13,3), 13 & (13,3), 13 & 0 & 0 & (13,3), 13 & 0 & 0 & (13,3), 13 & 0 & 0 \\
17 & (14,3), 11 & (14,3), 11 & 0 & 0 & (14,3), 11 & 0 & 0 & (14,3), 11 & 0 & 0 \\
18 & (14,3), 11 & (14,3), 11 & 0 & 0 & (14,3), 11 & 0 & 0 & (14,3), 11 & 0 & 0 \\
19 & (16,3), 13 & (16,3), 13 & 0 & 0 & (16,3), 13 & 0 & 0 & (16,3), 13 & 0 & 0 \\
20 & (16,3), 13 & (16,3), 13 & 0 & 0 & (16,3), 13 & 0 & 0 & (16,3), 13 & 0 & 0 \\
21 & (13,8), 13 & (13,8), 13 & 0 & 0 & (13,8), 13 & 0 & 0 & (13,8), 13 & 0 & 0 \\
22 & (13,8), 13 & (13,8), 13 & 0 & 0 & (13,8), 13 & 0 & 0 & (13,8), 13 & 0 & 0 \\
23 & (13,8), 13 & (13,8), 13 & 0 & 0 & (13,8), 13 & 0 & 0 & (13,8), 13 & 0 & 0 \\
24 & (13,11), 13 & (13,11), 13 & 0 & 0 & (13,11), 13 & 0 & 0 & (13,11), 13 & 0 & 0 \\
25 & (13,11), 13 & (13,11), 13 & 0 & 0 & (13,11), 13 & 0 & 0 & (13,11), 13 & 0 & 0 \\
26 & (13,13), 13 & (13,13), 13 & 0 & 0 & (13,13), 13 & 0 & 0 & (13,13), 13 & 0 & 0 \\
27 & (14,13), 3,11 & (14,13), 3,11 & 0 & 0 & (14,13), 3,11 & 0 & 0 & (14,13), 3,11 & 0 & 0 \\
28 & (14,14), 11 & (14,14), 11 & 0 & 0 & (14,13), 3 & 0 & \textcolor{black}{\textbf{-1}} & (14,13), 3 & 0 & \textcolor{black}{\textbf{-1}} \\
29 & (16,13), 3,8,13 & (16,13), 3,8,13 & 0 & 0 & (16,13), 3,8,13 & 0 & 0 & (16,13), 3,8,13 & 0 & 0 \\
30 & (16,14), 3,13 & (16,14), 3,13 & 0 & 0 & (16,13), 8 & 0 & \textcolor{black}{\textbf{-1}} & (16,13), 8 & 0 & \textcolor{black}{\textbf{-1}} \\
31 & (17,14), 3 & (16,14), 3,13 & \textcolor{black}{\textbf{-1}} & 0 & (17,14), 3 & 0 & 0 & (16,13), 8 & \textcolor{black}{\textbf{-1}} & \textcolor{black}{\textbf{-1}} \\
32 & (19,13), 8,11 & (19,13), 8,11 & 0 & 0 & (19,13), 8,11 & 0 & 0 & (19,13), 8,11 & 0 & 0 \\
33 & (19,13), 8,11 & (19,13), 8,11 & 0 & 0 & (19,13), 8,11 & 0 & 0 & (19,13), 8,11 & 0 & 0 \\
34 & (21,13), 8,13 & (21,13), 8,13 & 0 & 0 & (21,13), 8,13 & 0 & 0 & (21,13), 8,13 & 0 & 0 \\
35 & (22,13), 11 & (22,13), 11 & 0 & 0 & (22,13), 11 & 0 & 0 & (22,13), 11 & 0 & 0 \\
36 & (22,14), 8 & (22,13), 11 & 0 & \textcolor{blue}{\textbf{-1}} & (22,13), 11 & 0 & \textcolor{black}{\textbf{-1}} & (22,13), 11 & 0 & \textcolor{black}{\textbf{-1}} \\
37 & (24,13), 11,13 & (24,13), 11,13 & 0 & 0 & (24,13), 11,13 & 0 & 0 & (24,13), 11,13 & 0 & 0 \\
38 & (24,14), 11 & (24,14), 11 & 0 & 0 & (24,13), 13 & 0 & \textcolor{black}{\textbf{-1}} & (24,13), 13 & 0 & \textcolor{black}{\textbf{-1}} \\
39 & (26,13), 13 & (26,13), 13 & 0 & 0 & (26,13), 13 & 0 & 0 & (26,13), 13 & 0 & 0 \\
40 & (26,14), 13 & (26,14), 13 & 0 & 0 & (26,14), 13 & 0 & 0 & (26,14), 13 & 0 & 0 \\
41 & (27,14), 13 & (26,14), 13 & \textcolor{black}{\textbf{-1}} & 0 & (27,14), 13 & 0 & 0 & (26,14), 13 & \textcolor{black}{\textbf{-1}} & 0 \\
42 & (26,16), 13 & (26,16), 13 & 0 & 0 & (26,16), 13 & 0 & 0 & (26,16), 13 & 0 & 0 \\
43 & (27,16), 13 & (26,16), 13 & \textcolor{black}{\textbf{-1}} & 0 & (27,16), 13 & 0 & 0 & (26,16), 13 & \textcolor{black}{\textbf{-1}} & 0 \\
44 & (27,17), 13 & (27,17), 13 & 0 & 0 & (27,16), 13 & 0 & \textcolor{black}{\textbf{-1}} & (26,16), 13 & \textcolor{black}{\textbf{-1}} & \textcolor{black}{\textbf{-1}} \\
45 & (26,19), 13 & (26,19), 13 & 0 & 0 & (26,19), 13 & 0 & 0 & (26,19), 13 & 0 & 0 \\
46 & (26,19), 13 & (26,19), 13 & 0 & 0 & (26,19), 13 & 0 & 0 & (26,19), 13 & 0 & 0 \\
47 & (26,21), 13 & (26,21), 13 & 0 & 0 & (26,21), 13 & 0 & 0 & (26,21), 13 & 0 & 0 \\
48 & (26,22), 13 & (26,22), 13 & 0 & 0 & (26,22), 13 & 0 & 0 & (26,22), 13 & 0 & 0 \\
49 & (27,22), 13 & (26,22), 13 & \textcolor{black}{\textbf{-1}} & 0 & (26,22), 13 & \textcolor{blue}{\textbf{-1}} & 0 & (26,22), 13 & \textcolor{black}{\textbf{-1}} & 0 \\
\hline
\end{tabular}
}
\end{center}
\end{table}

\begin{table}\caption{
The initial outcomes, together with PSPE moves for the ruleset $S = \{4,5,9\}$. 
%Deviation table for $S = \{4,5,9\}$. The $\delta_i$ represent the deviations from FvF for the respective settings and players.
}\label{tab:deviation459}
\begin{center}
{\small 
\begin{tabular}{|r|l|l|r|r|l|r|r|l|r|r|}
\hline
Heap & FvF  & FvA  & $\delta_\alpha$ & $\delta_\beta$ & AvF & $\delta_\alpha$ & $\delta_\beta$ & AvA & $\delta_\alpha$ & $\delta_\beta$ \\
\hline
0 & (0,0) & (0,0) & 0 & 0 & (0,0) & 0 & 0 & (0,0) & 0 & 0 \\
1 & (0,0) & (0,0) & 0 & 0 & (0,0) & 0 & 0 & (0,0) & 0 & 0 \\
2 & (0,0) & (0,0) & 0 & 0 & (0,0) & 0 & 0 & (0,0) & 0 & 0 \\
3 & (0,0) & (0,0) & 0 & 0 & (0,0) & 0 & 0 & (0,0) & 0 & 0 \\
4 & (4,0), 4 & (4,0), 4 & 0 & 0 & (4,0), 4 & 0 & 0 & (4,0), 4 & 0 & 0 \\
5 & (5,0), 5 & (5,0), 5 & 0 & 0 & (5,0), 5 & 0 & 0 & (5,0), 5 & 0 & 0 \\
6 & (5,0), 5 & (5,0), 5 & 0 & 0 & (5,0), 5 & 0 & 0 & (5,0), 5 & 0 & 0 \\
7 & (5,0), 5 & (5,0), 5 & 0 & 0 & (5,0), 5 & 0 & 0 & (5,0), 5 & 0 & 0 \\
8 & (5,0), 5 & (5,0), 5 & 0 & 0 & (5,0), 5 & 0 & 0 & (5,0), 5 & 0 & 0 \\
9 & (9,0), 9 & (9,0), 9 & 0 & 0 & (9,0), 9 & 0 & 0 & (9,0), 9 & 0 & 0 \\
10 & (9,0), 9 & (9,0), 9 & 0 & 0 & (9,0), 9 & 0 & 0 & (9,0), 9 & 0 & 0 \\
11 & (9,0), 9 & (9,0), 9 & 0 & 0 & (9,0), 9 & 0 & 0 & (9,0), 9 & 0 & 0 \\
12 & (9,0), 9 & (9,0), 9 & 0 & 0 & (9,0), 9 & 0 & 0 & (9,0), 9 & 0 & 0 \\
13 & (9,4), 9 & (9,4), 9 & 0 & 0 & (9,4), 9 & 0 & 0 & (9,4), 9 & 0 & 0 \\
14 & (9,5), 9 & (9,5), 9 & 0 & 0 & (9,5), 9 & 0 & 0 & (9,5), 9 & 0 & 0 \\
15 & (9,5), 9 & (9,5), 9 & 0 & 0 & (9,5), 9 & 0 & 0 & (9,5), 9 & 0 & 0 \\
16 & (9,5), 9 & (9,5), 9 & 0 & 0 & (9,5), 9 & 0 & 0 & (9,5), 9 & 0 & 0 \\
17 & (9,5), 9 & (9,5), 9 & 0 & 0 & (9,5), 9 & 0 & 0 & (9,5), 9 & 0 & 0 \\
18 & (9,9), 4,5,9 & (9,9), 4,5,9 & 0 & 0 & (9,9), 4,5,9 & 0 & 0 & (9,9), 4,5,9 & 0 & 0 \\
19 & (10,9), 5 & (10,9), 5 & 0 & 0 & (10,9), 5 & 0 & 0 & (10,9), 5 & 0 & 0 \\
20 & (10,9), 5 & (10,9), 5 & 0 & 0 & (10,9), 5 & 0 & 0 & (10,9), 5 & 0 & 0 \\
21 & (10,9), 5 & (10,9), 5 & 0 & 0 & (10,9), 5 & 0 & 0 & (10,9), 5 & 0 & 0 \\
22 & (13,9), 4,9 & (13,9), 4,9 & 0 & 0 & (13,9), 4,9 & 0 & 0 & (13,9), 4,9 & 0 & 0 \\
23 & (14,9), 5,9 & (14,9), 5,9 & 0 & 0 & (14,9), 5,9 & 0 & 0 & (14,9), 5,9 & 0 & 0 \\
24 & (14,10), 5 & (14,10), 5 & 0 & 0 & (14,9), 9 & 0 & \textcolor{black}{\textbf{-1}} & (14,9), 9 & 0 & \textcolor{black}{\textbf{-1}} \\
25 & (14,10), 5 & (14,10), 5 & 0 & 0 & (14,9), 9 & 0 & \textcolor{black}{\textbf{-1}} & (14,9), 9 & 0 & \textcolor{black}{\textbf{-1}} \\
26 & (14,10), 5 & (14,10), 5 & 0 & 0 & (14,9), 9 & 0 & \textcolor{black}{\textbf{-1}} & (14,9), 9 & 0 & \textcolor{black}{\textbf{-1}} \\
27 & (18,9), 9 & (18,9), 9 & 0 & 0 & (18,9), 9 & 0 & 0 & (18,9), 9 & 0 & 0 \\
28 & (18,10), 9 & (18,10), 9 & 0 & 0 & (18,10), 9 & 0 & 0 & (18,10), 9 & 0 & 0 \\
29 & (18,10), 9 & (18,10), 9 & 0 & 0 & (18,10), 9 & 0 & 0 & (18,10), 9 & 0 & 0 \\
30 & (18,10), 9 & (18,10), 9 & 0 & 0 & (18,10), 9 & 0 & 0 & (18,10), 9 & 0 & 0 \\
31 & (18,13), 9 & (18,13), 9 & 0 & 0 & (18,13), 9 & 0 & 0 & (18,13), 9 & 0 & 0 \\
32 & (18,14), 9 & (18,14), 9 & 0 & 0 & (18,14), 9 & 0 & 0 & (18,14), 9 & 0 & 0 \\
33 & (19,14), 9 & (18,14), 9 & \textcolor{black}{\textbf{-1}} & 0 & (19,14), 9 & 0 & 0 & (18,14), 9 & \textcolor{black}{\textbf{-1}} & 0 \\
34 & (19,14), 9 & (18,14), 9 & \textcolor{black}{\textbf{-1}} & 0 & (19,14), 9 & 0 & 0 & (18,14), 9 & \textcolor{black}{\textbf{-1}} & 0 \\
35 & (19,14), 9 & (18,14), 9 & \textcolor{black}{\textbf{-1}} & 0 & (19,14), 9 & 0 & 0 & (18,14), 9 & \textcolor{black}{\textbf{-1}} & 0 \\
36 & (18,18), 4,5,9 & (18,18), 4,5,9 & 0 & 0 & (18,18), 4,5,9 & 0 & 0 & (18,18), 4,5,9 & 0 & 0 \\
37 & (19,18), 5,9 & (19,18), 5,9 & 0 & 0 & (19,18), 5,9 & 0 & 0 & (19,18), 5,9 & 0 & 0 \\
38 & (19,19), 5 & (19,19), 5 & 0 & 0 & (19,18), 5,9 & 0 & \textcolor{black}{\textbf{-1}} & (19,18), 5,9 & 0 & \textcolor{black}{\textbf{-1}} \\
39 & (19,19), 5 & (19,19), 5 & 0 & 0 & (19,18), 5,9 & 0 & \textcolor{black}{\textbf{-1}} & (19,18), 5,9 & 0 & \textcolor{black}{\textbf{-1}} \\
40 & (22,18), 4,9 & (22,18), 4,9 & 0 & 0 & (22,18), 4,9 & 0 & 0 & (22,18), 4,9 & 0 & 0 \\
41 & (23,18), 5,9 & (23,18), 5,9 & 0 & 0 & (23,18), 5,9 & 0 & 0 & (23,18), 5,9 & 0 & 0 \\
42 & (23,19), 4,5,9 & (23,19), 5,9 & 0 & 0 & (23,18), 9 & 0 & \textcolor{black}{\textbf{-1}} & (23,18), 9 & 0 & \textcolor{black}{\textbf{-1}} \\
43 & (24,19), 5 & (23,19), 5,9 & \textcolor{black}{\textbf{-1}} & 0 & (24,19), 5 & 0 & 0 & (23,18), 9 & \textcolor{black}{\textbf{-1}} & \textcolor{black}{\textbf{-1}} \\
44 & (24,19), 5 & (23,19), 5,9 & \textcolor{black}{\textbf{-1}} & 0 & (24,19), 5 & 0 & 0 & (23,18), 9 & \textcolor{black}{\textbf{-1}} & \textcolor{black}{\textbf{-1}} \\
45 & (27,18), 9 & (27,18), 9 & 0 & 0 & (27,18), 9 & 0 & 0 & (27,18), 9 & 0 & 0 \\
46 & (27,19), 9 & (27,19), 9 & 0 & 0 & (27,19), 9 & 0 & 0 & (27,19), 9 & 0 & 0 \\
47 & (28,19), 9 & (27,19), 9 & \textcolor{black}{\textbf{-1}} & 0 & (28,19), 9 & 0 & 0 & (27,19), 9 & \textcolor{black}{\textbf{-1}} & 0 \\
48 & (28,19), 9 & (27,19), 9 & \textcolor{black}{\textbf{-1}} & 0 & (28,19), 9 & 0 & 0 & (27,19), 9 & \textcolor{black}{\textbf{-1}} & 0 \\
49 & (27,22), 9 & (27,22), 9 & 0 & 0 & (27,22), 9 & 0 & 0 & (27,22), 9 & 0 & 0 \\
\hline
\end{tabular}
}
\end{center}
\end{table}
\newpage
\begin{table}
\begin{center}
{\small 
\begin{tabular}{|r|l|l|r|r|l|r|r|l|r|r|}
\hline
Heap & FvF  & FvA  & $\delta \alpha$ & $\delta \beta$ & AvF & $\delta \alpha$ & $\delta \beta$ & AvA & $\delta \alpha$ & $\delta \beta$ \\
\hline
50 & (27,23), 9 & (27,23), 9 & 0 & 0 & (27,23), 9 & 0 & 0 & (27,23), 9 & 0 & 0 \\
51 & (28,23), 9 & (27,23), 9 & \textcolor{black}{\textbf{-1}} & 0 & (28,23), 9 & 0 & 0 & (27,23), 9 & \textcolor{black}{\textbf{-1}} & 0 \\
52 & (28,24), 9 & (28,24), 9 & 0 & 0 & (28,23), 9 & 0 & \textcolor{black}{\textbf{-1}} & (27,23), 9 & \textcolor{black}{\textbf{-1}} & \textcolor{black}{\textbf{-1}} \\
53 & (28,24), 9 & (28,24), 9 & 0 & 0 & (28,23), 9 & 0 & \textcolor{black}{\textbf{-1}} & (27,23), 9 & \textcolor{black}{\textbf{-1}} & \textcolor{black}{\textbf{-1}} \\
54 & (27,27), 4,5,9 & (27,27), 4,5,9 & 0 & 0 & (27,27), 4,5,9 & 0 & 0 & (27,27), 4,5,9 & 0 & 0 \\
55 & (28,27), 5,9 & (28,27), 5,9 & 0 & 0 & (28,27), 5,9 & 0 & 0 & (28,27), 5,9 & 0 & 0 \\
56 & (28,28), 4,5,9 & (28,28), 5,9 & 0 & 0 & (28,27), 5,9 & 0 & \textcolor{black}{\textbf{-1}} & (28,27), 5,9 & 0 & \textcolor{black}{\textbf{-1}} \\
57 & (29,28), 5 & (28,28), 5,9 & \textcolor{black}{\textbf{-1}} & 0 & (29,28), 5 & 0 & 0 & (28,27), 5,9 & \textcolor{black}{\textbf{-1}} & \textcolor{black}{\textbf{-1}} \\
58 & (31,27), 4,9 & (31,27), 4,9 & 0 & 0 & (31,27), 4,9 & 0 & 0 & (31,27), 4,9 & 0 & 0 \\
59 & (32,27), 5,9 & (32,27), 5,9 & 0 & 0 & (32,27), 5,9 & 0 & 0 & (32,27), 5,9 & 0 & 0 \\
60 & (32,28), 4,5,9 & (32,28), 5,9 & 0 & 0 & (32,27), 9 & 0 & \textcolor{black}{\textbf{-1}} & (32,27), 9 & 0 & \textcolor{black}{\textbf{-1}} \\
61 & (33,28), 5,9 & (32,29), 4 & \textcolor{black}{\textbf{-1}} & \textcolor{red}{\textbf{+1}} & (33,28), 5,9 & 0 & 0 & (32,27), 9 & \textcolor{black}{\textbf{-1}} & \textcolor{black}{\textbf{-1}} \\
62 & (33,29), 5 & (33,29), 5 & 0 & 0 & (33,28), 5,9 & 0 & \textcolor{black}{\textbf{-1}} & (32,27), 9 & \textcolor{black}{\textbf{-1}} & \textcolor{black}{\textbf{-2}} \\
63 & (36,27), 9 & (36,27), 9 & 0 & 0 & (36,27), 9 & 0 & 0 & (36,27), 9 & 0 & 0 \\
64 & (36,28), 9 & (36,28), 9 & 0 & 0 & (36,28), 9 & 0 & 0 & (36,28), 9 & 0 & 0 \\
65 & (37,28), 9 & (36,28), 9 & \textcolor{black}{\textbf{-1}} & 0 & (37,28), 9 & 0 & 0 & (36,28), 9 & \textcolor{black}{\textbf{-1}} & 0 \\
66 & (37,29), 9 & (37,29), 9 & 0 & 0 & (37,28), 9 & 0 & \textcolor{black}{\textbf{-1}} & (36,28), 9 & \textcolor{black}{\textbf{-1}} & \textcolor{black}{\textbf{-1}} \\
67 & (36,31), 9 & (36,31), 9 & 0 & 0 & (36,31), 9 & 0 & 0 & (36,31), 9 & 0 & 0 \\
68 & (36,32), 9 & (36,32), 9 & 0 & 0 & (36,32), 9 & 0 & 0 & (36,32), 9 & 0 & 0 \\
69 & (37,32), 9 & (36,32), 9 & \textcolor{black}{\textbf{-1}} & 0 & (37,32), 9 & 0 & 0 & (36,32), 9 & \textcolor{black}{\textbf{-1}} & 0 \\
70 & (37,33), 9 & (37,33), 9 & 0 & 0 & (38,32), 9 & \textcolor{red}{\textbf{+1}} & \textcolor{black}{\textbf{-1}} & (36,32), 9 & \textcolor{black}{\textbf{-1}} & \textcolor{black}{\textbf{-1}} \\
71 & (38,33), 9 & (37,33), 9 & \textcolor{black}{\textbf{-1}} & 0 & (38,33), 9 & 0 & 0 & (36,32), 9 & \textcolor{black}{\textbf{-2}} & \textcolor{black}{\textbf{-1}} \\
72 & (36,36), 4,5,9 & (36,36), 4,5,9 & 0 & 0 & (36,36), 4,5,9 & 0 & 0 & (36,36), 4,5,9 & 0 & 0 \\
73 & (37,36), 5,9 & (37,36), 5,9 & 0 & 0 & (37,36), 5,9 & 0 & 0 & (37,36), 5,9 & 0 & 0 \\
74 & (37,37), 4,5,9 & (37,37), 5,9 & 0 & 0 & (37,36), 5,9 & 0 & \textcolor{black}{\textbf{-1}} & (37,36), 5,9 & 0 & \textcolor{black}{\textbf{-1}} \\
75 & (38,37), 5,9 & (37,38), 4,5 & \textcolor{black}{\textbf{-1}} & \textcolor{red}{\textbf{+1}} & (38,37), 5,9 & 0 & 0 & (37,36), 5,9 & \textcolor{black}{\textbf{-1}} & \textcolor{black}{\textbf{-1}} \\
76 & (40,36), 4,9 & (40,36), 4,9 & 0 & 0 & (40,36), 4,9 & 0 & 0 & (40,36), 4,9 & 0 & 0 \\
77 & (41,36), 5,9 & (41,36), 5,9 & 0 & 0 & (41,36), 5,9 & 0 & 0 & (41,36), 5,9 & 0 & 0 \\
78 & (41,37), 4,5,9 & (41,37), 5,9 & 0 & 0 & (41,36), 9 & 0 & \textcolor{black}{\textbf{-1}} & (41,36), 9 & 0 & \textcolor{black}{\textbf{-1}} \\
79 & (42,37), 5,9 & (41,38), 4,9 & \textcolor{black}{\textbf{-1}} & \textcolor{red}{\textbf{+1}} & (42,37), 4,5,9 & 0 & 0 & (41,36), 9 & \textcolor{black}{\textbf{-1}} & \textcolor{black}{\textbf{-1}} \\
80 & (42,38), 5,9 & (42,38), 5,9 & 0 & 0 & (43,37), 5 & \textcolor{red}{\textbf{+1}} & \textcolor{black}{\textbf{-1}} & (41,36), 9 & \textcolor{black}{\textbf{-1}} & \textcolor{black}{\textbf{-2}} \\
81 & (45,36), 9 & (45,36), 9 & 0 & 0 & (45,36), 9 & 0 & 0 & (45,36), 9 & 0 & 0 \\
82 & (45,37), 9 & (45,37), 9 & 0 & 0 & (45,37), 9 & 0 & 0 & (45,37), 9 & 0 & 0 \\
83 & (46,37), 9 & (45,37), 9 & \textcolor{black}{\textbf{-1}} & 0 & (46,37), 9 & 0 & 0 & (45,37), 9 & \textcolor{black}{\textbf{-1}} & 0 \\
84 & (46,38), 9 & (46,38), 9 & 0 & 0 & (47,37), 9 & \textcolor{red}{\textbf{+1}} & \textcolor{black}{\textbf{-1}} & (45,37), 9 & \textcolor{black}{\textbf{-1}} & \textcolor{black}{\textbf{-1}} \\
85 & (45,40), 9 & (45,40), 9 & 0 & 0 & (45,40), 9 & 0 & 0 & (45,40), 9 & 0 & 0 \\
86 & (45,41), 9 & (45,41), 9 & 0 & 0 & (45,41), 9 & 0 & 0 & (45,41), 9 & 0 & 0 \\
87 & (46,41), 9 & (45,41), 9 & \textcolor{black}{\textbf{-1}} & 0 & (46,41), 9 & 0 & 0 & (45,41), 9 & \textcolor{black}{\textbf{-1}} & 0 \\
88 & (46,42), 9 & (46,42), 9 & 0 & 0 & (47,41), 9 & \textcolor{red}{\textbf{+1}} & \textcolor{black}{\textbf{-1}} & (45,41), 9 & \textcolor{black}{\textbf{-1}} & \textcolor{black}{\textbf{-1}} \\
89 & (47,42), 9 & (46,43), 9 & \textcolor{black}{\textbf{-1}} & \textcolor{red}{\textbf{+1}} & (47,42), 9 & 0 & 0 & (45,41), 9 & \textcolor{black}{\textbf{-2}} & \textcolor{black}{\textbf{-1}} \\
90 & (45,45), 4,5,9 & (45,45), 4,5,9 & 0 & 0 & (45,45), 4,5,9 & 0 & 0 & (45,45), 4,5,9 & 0 & 0 \\
91 & (46,45), 5,9 & (46,45), 5,9 & 0 & 0 & (46,45), 5,9 & 0 & 0 & (46,45), 5,9 & 0 & 0 \\
92 & (46,46), 4,5,9 & (46,46), 5,9 & 0 & 0 & (46,45), 5,9 & 0 & \textcolor{black}{\textbf{-1}} & (46,45), 5,9 & 0 & \textcolor{black}{\textbf{-1}} \\
93 & (47,46), 5,9 & (46,47), 4,5,9 & \textcolor{black}{\textbf{-1}} & \textcolor{red}{\textbf{+1}} & (47,46), 4,5,9 & 0 & 0 & (46,45), 5,9 & \textcolor{black}{\textbf{-1}} & \textcolor{black}{\textbf{-1}} \\
94 & (49,45), 4,9 & (49,45), 4,9 & 0 & 0 & (49,45), 4,9 & 0 & 0 & (49,45), 4,9 & 0 & 0 \\
95 & (50,45), 5,9 & (50,45), 5,9 & 0 & 0 & (50,45), 5,9 & 0 & 0 & (50,45), 5,9 & 0 & 0 \\
96 & (50,46), 4,5,9 & (50,46), 5,9 & 0 & 0 & (50,45), 9 & 0 & \textcolor{black}{\textbf{-1}} & (50,45), 9 & 0 & \textcolor{black}{\textbf{-1}} \\
97 & (51,46), 5,9 & (50,47), 4,9 & \textcolor{black}{\textbf{-1}} & \textcolor{red}{\textbf{+1}} & (51,46), 4,5,9 & 0 & 0 & (50,45), 9 & \textcolor{black}{\textbf{-1}} & \textcolor{black}{\textbf{-1}} \\
98 & (51,47), 5,9 & (51,47), 5,9 & 0 & 0 & (52,46), 5,9 & \textcolor{red}{\textbf{+1}} & \textcolor{black}{\textbf{-1}} & (50,45), 9 & \textcolor{black}{\textbf{-1}} & \textcolor{black}{\textbf{-2}} \\
99 & (54,45), 9 & (54,45), 9 & 0 & 0 & (54,45), 9 & 0 & 0 & (54,45), 9 & 0 & 0 \\
\hline
\end{tabular}
}
\end{center}
\end{table}

\end{document}